\newtheorem{thm}{Theorem}[section]
\newtheorem{lem}[thm]{Lemma}
\newtheorem{prop}[thm]{Proposition}
\theoremstyle{definition}
\newtheorem{defn}[thm]{Definition}
\newtheorem{obs}[thm]{Observation}
\newtheorem{notn}[thm]{Notation}
\theoremstyle{remark}
\numberwithin{equation}{section}
\newcommand{\verteq}[0]{\begin{turn}{90} $=$\end{turn}}
\begin{document}

\title{Classifying Voronoi Graphs of Hex Spheres}
\author{Aldo-Hilario Cruz-Cota}
\address{Department of Mathematics, Grand Valley State University, Allendale, MI 49401-9401, USA}
\email{cruzal@gvsu.edu}

\keywords{singular Euclidean surfaces, Voronoi graphs}

\date{\today}

\dedicatory{}

\begin{abstract}

\noindent A \emph{hex sphere} is a singular Euclidean sphere with four cones points whose cone angles are (integer) multiples of $\frac{2\pi}{3}$ but less than $2\pi$. Given a hex sphere $M$, we consider its Voronoi decomposition centered at the two cone points with greatest cone angles. In this paper we use elementary Euclidean geometry to describe the Voronoi regions of hex spheres and classify the Voronoi graphs of hex spheres (up to graph isomorphism).
\end{abstract}

\maketitle

\section{Introduction}
A surface is \emph{singular Euclidean} if it is locally modeled on either the Euclidean plane or a Euclidean cone. In this article we study a special type of singular Euclidean spheres that we call \emph{hex spheres}. These are defined as singular Euclidean spheres with four cone points which have cone angles that are multiples of $\frac{2\pi}{3}$ but less than $2\pi$. Singular Euclidean surfaces whose cone angles are multiples of $\frac{2\pi}{3}$ are mainly studied because they arise as limits at infinity of real projective structures. 

We now give examples of hex spheres. Consider a parallelogram $P$ on the Euclidean plane such that two of its interior angles equal $\pi/3$, while the other two equal $2\pi/3$. Such a parallelogram will be called a \emph{perfect parallelogram}. The  double $D$ of a perfect parallelogram $P$ is an example of a hex sphere. This example gives rise to a $3$-parameter family of hex spheres. To see this, let $\gamma$ be the simple closed geodesic in $D$ that is the double of a segment in $P$  that is perpendicular to one of the longest sides of $P$. Then two parameters of the family of hex spheres correspond to the lengths of two adjacent sides of $P$, and the other parameter corresponds to twisting $D$ along $\gamma$.

Let $M$ be a hex sphere. The Gauss-Bonnet Theorem implies that exactly two of the cone angles  of $M$ are equal to $\frac{4\pi}{3}$, while the other two are equal to $\frac{2\pi}{3}$. We consider the Voronoi decomposition of $M$ centered at the two cone points of angle $\frac{4\pi}{3}$. This decomposes $M$ into two cells, the Voronoi cells, which intersect along a graph in $M$, the Voronoi graph. 

We can now state our main results. 

\begin{thm} \label{thm-intro-classif-Vor-graph}
Let $M$ be a hex sphere, and let $\Gamma$ be the Voronoi graph of $M$ (with respect to the Voronoi decomposition of $M$ centered at the two cone points of angle $\frac{4\pi}{3}$). Then, up to graph isomorphism, $\Gamma$ is one of the graphs from Figure \ref{fig-classif-Vor-graph1}.

 \end{thm}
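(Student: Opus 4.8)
The plan is to reduce the classification to a finite combinatorial problem by first pinning down the local structure of $\Gamma$ and then applying Gauss--Bonnet cell by cell. Write $p_1,p_2$ for the two cone points of angle $\frac{4\pi}{3}$ and $q_1,q_2$ for those of angle $\frac{2\pi}{3}$, and let $C_1,C_2$ be the two Voronoi cells, so that $\Gamma=\partial C_1=\partial C_2$. First I would develop each cell into the plane: cutting $C_i$ along a shortest geodesic from $p_i$ to $\Gamma$ presents $C_i$ as a Euclidean polygon with the cone point $p_i$ (and any of the $q_j$ lying in its interior) inside. Along any edge of $\Gamma$ there is a unique shortest geodesic to each of $p_1,p_2$ of equal length; developing the two geodesics across the edge shows that each edge is the perpendicular bisector of the developed images, hence a geodesic arc of $M$.

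Next comes the structural lemma that every vertex of $\Gamma$ has \emph{even} degree. Each edge of $\Gamma$ has $C_1$ on one side and $C_2$ on the other, so as one turns around a vertex $v$ the successive sectors lie alternately in $C_1$ and $C_2$; alternation forces an even number of edges at $v$. In particular there are no trivalent (or generally odd-degree) vertices, which already excludes many graphs. Combined with the observation that $p_i$ lies in the interior of $C_i$ (it is at distance $0$ from itself), this shows that, when both cells are disks and $\Gamma$ is connected, Euler's formula $V-E+2=2$ together with the even-degree condition forces every vertex to have degree exactly $2$, i.e. $\Gamma$ is a single cycle; genuinely different graphs can only arise when a cell fails to be simply connected or $\Gamma$ is disconnected, which I expect to be governed entirely by the placement of $q_1,q_2$.

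The quantitative engine is Gauss--Bonnet applied to each cell. Since $C_i$ is a disk with geodesic boundary edges, the sum of its exterior angles plus the total cone curvature in its interior equals $2\pi$. The cone $p_i$ contributes curvature $2\pi-\frac{4\pi}{3}=\frac{2\pi}{3}$, while each $q_j$ in the interior contributes $2\pi-\frac{2\pi}{3}=\frac{4\pi}{3}$. Thus the total exterior angle of $C_i$ equals $\frac{4\pi}{3}$ minus $\frac{4\pi}{3}$ times the number of $q_j$'s inside $C_i$. This yields a clean case split according to how $q_1,q_2$ are distributed among $\operatorname{int}C_1$, $\operatorname{int}C_2$, and $\Gamma$ itself: for instance, if each cell contains exactly one $q_j$ then both boundary curves are smooth geodesics and $\Gamma$ is a single closed geodesic, whereas if one cell contains both $q_j$ its boundary must carry reflex corners of total turning $-\frac{4\pi}{3}$. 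I would carry out this finite enumeration, treating separately the degenerate cases in which some $q_j$ lies on $\Gamma$ (where the cone angle $\frac{2\pi}{3}$ is split between the two cells and contributes to the boundary angles rather than the interior curvature).

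The main obstacle is controlling the geometry at the vertices precisely enough to obtain a finite list. Concretely, I must show that the corners of $\Gamma$ arise only where a shortest geodesic to some $p_i$ grazes a cone point $q_j$, that the resulting exterior angles are quantized (forced to be small multiples of $\frac{\pi}{3}$ by the deficit $\frac{4\pi}{3}$ at the $q_j$), and that no vertex of degree $\ge 4$ (equivalently no disconnection of a cell) can occur except in the explicitly allowed configurations. Once the admissible exterior-angle data are determined, the total-turning identities above leave only finitely many combinatorial types, and the final step is to exhibit an explicit hex sphere realizing each type --- most transparently by using doubles of perfect parallelograms and their twists, as in the examples of the introduction --- so that the list is exactly the one in Figure~\ref{fig-classif-Vor-graph1}.
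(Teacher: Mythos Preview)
Your approach has a genuine gap at its foundation: the ``even degree'' lemma is false for $\Gamma$ as defined here. The open Voronoi cell centered at $p_i$ consists of points that are both strictly closer to $p_i$ \emph{and} joined to $p_i$ by a \emph{unique} shortest geodesic. Consequently $\Gamma$ is not merely the bisector $\partial C_1=\partial C_2$; it also contains the cut locus of each center within its own half, namely points closer to $p_1$ than to $p_2$ but reached from $p_1$ by two or more shortest geodesics. Along such an arc both sides lie in the closure of the \emph{same} cell, so your alternation argument (``sectors around $v$ lie alternately in $C_1$ and $C_2$'') breaks down. Concretely, when $M$ is recovered by gluing the edges of $\partial A\sqcup\partial B$ in pairs, an edge of $\partial B$ may be glued to another edge of $\partial B$; this produces vertices of $\Gamma$ of degree $1$ and $3$, and such vertices do appear in the final list in Figure~\ref{fig-classif-Vor-graph1}.

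The second pillar also collapses: the cone points $q_1,q_2$ of angle $\frac{2\pi}{3}$ are \emph{never} interior to a Voronoi cell, since the definition excludes all singular points except the center. They are always vertices of $\Gamma$, so your trichotomy on their location is vacuous, and the Gauss--Bonnet identity you propose reduces to the single statement that the exterior angles of each polygon $A$ (resp.\ $B$) in its tangent cone sum to $\frac{4\pi}{3}$. By itself this does not bound the number of edges. The paper instead gets the bound $p+q\le 8$ by combining convexity of $A$ and $B$ (each corner angle is $<\pi$) with the observation that only $q_1,q_2$ can be vertices of $\Gamma$ of degree $\le 2$; it then enumerates the finitely many gluing patterns on $\partial A\sqcup\partial B$ for each admissible pair $(p,q)$ and uses the holonomy identities $d_M(a,c)=d_M(b,d)$ and $d_M(a,d)=d_M(b,c)$ to eliminate the asymmetric cases $p\ne q$.
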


\begin{figure}[ht!]
\labellist
\small\hair 3pt

\endlabellist
\centering
  \includegraphics[width=0.45\textwidth]{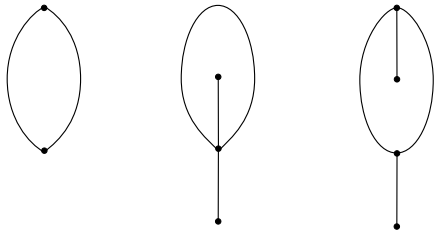}
  \caption{Classifying the Voronoi graphs of hex spheres}
  \label{fig-classif-Vor-graph1}
\end{figure}

\begin{thm} \label{thm-intro-describ-Vor-reg}
Let $M$ be a hex sphere and consider its Voronoi decomposition centered at the two cone points of angle $\frac{4\pi}{3}$. Then the two Voronoi regions of $M$ are isometric. These regions  embed isometrically in a Euclidean cone as convex, geodesic polygons, where the center of the Voronoi region corresponds to the vertex of the cone. Further, the hex sphere $M$ can be recovered from the disjoint union of the Voronoi regions by identifying pairs of edges on their boundaries according to one of $3$ possible combinatorial patterns (one pattern for each of the possible shapes of the Voronoi graphs, see Figure \ref{fig-classif-Vor-graph1}).
\end{thm}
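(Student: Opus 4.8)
The plan is to analyze each Voronoi region through the exponential map based at its center and then to compare the two developments. Write $c_1,c_2$ for the two cone points of angle $\frac{4\pi}{3}$ (the centers), $v_1,v_2$ for the two cone points of angle $\frac{2\pi}{3}$, and $R_1,R_2$ for the closed Voronoi regions, with common boundary the Voronoi graph $\Gamma$. Throughout I would lean on the structural description of $\Gamma$ furnished by Theorem \ref{thm-intro-classif-Vor-graph} and the lemmas preceding it: that $\Gamma$ is the cut locus of the pair $\{c_1,c_2\}$, that its edges are geodesic arcs, and that each $v_j$ is a vertex of $\Gamma$ (lying either on the bisector or at the tip of a cut-locus spur).

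First I would establish the embedding statement for a single region, say $R_1$. The key point is that the open cell consists exactly of the points joined to $c_1$ by a unique minimizing geodesic of length less than the distance to $c_2$; on this set $\exp_{c_1}$ is injective, and since the $v_j$ have been placed on $\Gamma$, the center is the only cone point in the closed cell, so the map is a local isometry with no interior singularity. Developing along the radial geodesics from $c_1$ then carries $R_1$ isometrically into the Euclidean cone $C_{4\pi/3}$ of total angle $\frac{4\pi}{3}$, sending $c_1$ to the cone vertex. I would check that the image is star-shaped about the vertex and that its boundary is the union of the developed edges of $\Gamma$, which are geodesic; hence it is a geodesic polygon. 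Convexity reduces to bounding each boundary interior angle by $\pi$: at a bisector vertex this follows from the local picture of two equidistant minimizing geodesics, and at a vertex $v_j$ it follows from the cone angle there being only $\frac{2\pi}{3}<\pi$.

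Next I would show $R_1$ and $R_2$ are isometric. The cleanest route is to produce an isometric involution $\tau$ of $M$ interchanging $c_1$ and $c_2$ (hence $v_1$ and $v_2$): such a $\tau$ preserves the metric, so it preserves distances to $\{c_1,c_2\}$, carries the Voronoi decomposition to itself, and exchanges the two regions, giving the required isometry on restriction. For the double of a perfect parallelogram $\tau$ is the central symmetry, and in general I would recover it from the symmetry of the defining data. As a self-contained alternative I would compare the developed polygons directly: along $\Gamma$ the radial function $s\mapsto d(c_1,\gamma(s))=d(c_2,\gamma(s))$ is common to both regions, the total vertex angle is $\frac{4\pi}{3}$ for both, and the equidistance at each $v_j$ forces $\Gamma$ to split its cone angle as $\frac{\pi}{3}+\frac{\pi}{3}$; these data determine a geodesic polygon in $C_{4\pi/3}$ up to isometry, so the two polygons are congruent.

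Finally, for the reconstruction I would record how the two polygons reassemble into $M$. Each edge of $\Gamma$ lies on the boundary of both regions, so in the disjoint union $R_1\sqcup R_2$ it occurs as one side of each polygon, and $M$ is recovered by identifying these sides in pairs along the isometries by which $\Gamma$ is traversed; an edge running along a spur at some $v_j$ is identified within the single polygon containing it. Feeding the three isomorphism types of $\Gamma$ from Theorem \ref{thm-intro-classif-Vor-graph} into this scheme yields exactly three combinatorial identification patterns, and in each case I would verify that the identifications restore the correct cone angles (the two vertices reproduce $c_1,c_2$ at $\frac{4\pi}{3}$, and the paired $\frac{\pi}{3}$-corners at each $v_j$ recombine to $\frac{2\pi}{3}$), so that the glued surface is indeed $M$. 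I expect the main obstacle to be the first step: proving the development is a genuine embedding onto a \emph{convex} polygon, which requires controlling the cut locus of each center in the presence of the positive-curvature cone points $v_1,v_2$ and verifying the angle bound at every boundary vertex.
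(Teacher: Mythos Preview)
Your embedding step is essentially the paper's Proposition \ref{Vor_emb_in_con} (imported from \cite{Boileau}, \cite{Cooper}), and your reconstruction step is fine once Theorem \ref{thm-intro-classif-Vor-graph} is in hand. The genuine gap is in the middle step, where you argue that the two Voronoi regions are isometric.

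Neither of your two routes works as stated. The involution route is circular: you assert that ``in general I would recover $\tau$ from the symmetry of the defining data,'' but a hex sphere is given only as an abstract singular Euclidean sphere, and the existence of an isometric involution exchanging $c_1$ and $c_2$ is essentially equivalent to the conclusion $R_1\cong R_2$ you are trying to prove. The direct-comparison route rests on two false premises. First, you assume $\Gamma\subset\mathrm{Equidist}(c_1,c_2)$ when you write ``along $\Gamma$ the radial function $s\mapsto d(c_1,\gamma(s))=d(c_2,\gamma(s))$ is common to both regions''; but in the $p=q=3$ and $p=q=4$ cases $\Gamma$ has spur edges, both sides of which lie in the \emph{same} Voronoi cell, and Lemma \ref{lem_vert_deg_1} shows that a degree-$1$ vertex of $\Gamma$ is \emph{never} equidistant. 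Second, you claim ``the equidistance at each $v_j$ forces $\Gamma$ to split its cone angle as $\tfrac{\pi}{3}+\tfrac{\pi}{3}$''; in fact, when $v_j$ sits at the end of a spur the full cone angle $\tfrac{2\pi}{3}$ lies inside a single polygon (this is exactly condition (3) in Lemmas \ref{lem_p3_q3} and \ref{lem_p4_q4}), so the split is $\tfrac{2\pi}{3}+0$, not $\tfrac{\pi}{3}+\tfrac{\pi}{3}$.

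What the paper actually uses in place of your symmetry argument is the holonomy identity of Theorem \ref{holonomy_argument}, namely $d_M(a,c)=d_M(b,d)$ and $d_M(a,d)=d_M(b,c)$. This is the non-obvious metric input: it couples the two cells without presupposing an involution, and combined with the case-by-case combinatorics of Lemmas \ref{lem_p2_q2}, \ref{lem_p3_q3}, \ref{lem_p4_q4} it pins down enough side lengths and angles to force $A\cong B$ in each of the three cases. If you want to salvage your approach, you would need to replace the equidistance assumption by this holonomy relation and then carry out the comparison separately for each shape of $\Gamma$, which is in effect what the paper does.
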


We now sketch the proof of the main theorems. It is shown in \cite{Boileau} and \cite{Cooper} that the Voronoi region of a hex sphere centered at a cone point embeds isometrically in the tangent cone of the sphere at that point. The image of the Voronoi region in the cone is a convex, geodesic polygon that is star-shaped with respect to the vertex of the cone. The Gauss-Bonnet theorem gives numeric restrictions on the integers $p$ and $q$, which we define as the number of edges on the boundaries of the Voronoi regions. Then we do a case-by-case analysis of all possible values of $p$ and $q$, showing that only the most symmetric situation $p=q$ can occur. We also obtain that the numbers $p$ and $q$ can only be equal to either $2$, $3$ or $4$. These three possibilities give rise to the three possible Voronoi graphs from Figure \ref{fig-classif-Vor-graph1}, which gives a classification of Voronoi graphs of hex spheres. This proves Theorem \ref{thm-intro-classif-Vor-graph}. Then we analyze the cases $p=q=2$, $p=q=3$ and $p=q=4$ separately. In each of these cases, we use elementary Euclidean geometry to prove that the Voronoi regions of the hex sphere must be isometric. We also find the unique gluing pattern on the boundary of the Voronoi regions that allows to recover the singular hex sphere from its Voronoi regions. This concludes the sketch of the proof Theorem \ref{thm-intro-describ-Vor-reg}.

The author would like to thank his PhD adviser, Professor Daryl Cooper, for many helpful discussions. Portions of this work were completed at the University of California, Santa Barbara and Grand Valley State University.

\section{Singular Euclidean Surfaces} \label{sect-SES}

\begin{defn} (\cite{Troyanov1})  \label{defn_cone_manif}
A closed triangulated surface $M$ is \emph{singular Euclidean} if it satisfies the following properties:

\begin{enumerate}
\item For every 2-simplex $T$ of $M$ there is a simplicial homeomorphism $f_T$ of $T$ onto a non-degenerate triangle $f_T(T)$ in the Euclidean plane.
\item If $T_1$ and $T_2$ are two 2-simplices of $M$ with non-empty intersection, then there is an isometry $g_{12}$ of the Euclidean plane such that $f_{T_1}=g_{12}f_{T_2}$ on $T_1 \cap T_2$.
\end{enumerate}
\end{defn}

There is a natural way to measure the length of a curve $\gamma$ in a singular Euclidean surface $M$. This notion of length of curves coincides with the Euclidean length on each triangle of $M$ and it turns $M$ into a path metric space. That is, there is a \emph{distance function} $d_M$ on $M$ for which the distance between two points in $M$ is the infimum of the lengths of the paths in $M$ joining these two points.

\begin{defn} 
Let $M$ be a singular Euclidean surface $M$ and let $p$ be a point in $M$. The \emph{cone angle} of $M$ at $p$ is either $2\pi$ (if $p$ is not a vertex of $M$) or the sum of the angles of all triangles in $M$ that are incident to $p$ (if $p$ is a vertex of $M$). If $\theta$ is the cone angle of $M$ at $p$, then the number $k=2\pi-\theta$ is the (concentrated) \emph{curvature} of $M$ at $p$.
\end{defn}

The next definition generalizes the concept of tangent plane (see \cite{Burago}).

\begin{defn} (\cite{Cooper})
 Given a singular Euclidean surface $M$ and a point $p$ in $M$, the \emph{tangent cone} $T_pM$ of $M$ at $p$ is the union of the Euclidean tangent cones to all the 2-simplices containing $p$.  The cone $T_pM$ is isometric to a Euclidean cone of angle equal to the cone angle of $M$ at $p$. The vertex of the cone $T_pM$ will be denoted by $v_p$.
\end{defn}

A point $p$ in a singular Euclidean surface $M$ is called \emph{regular} if its cone angle equals $2\pi$. Otherwise it is called a \emph{singular} point or a \emph{cone point}. 
The \emph{singular locus} $\Sigma$ is the set of all singular points in $M$.

\section{Two Theorems from Differential Geometry}

A \emph{geodesic} in a singular Euclidean surface $M$ is a path in $M$ that is \emph{locally}
length-minimizing. A \emph{shortest geodesic} $\gamma$ is a path in
$M$ that is \emph{globally} length minimizing (i.e, the distance
between the endpoints of $\gamma$ is equal to the length of
$\gamma$). The geodesics in this article will always be parametrized by arc-length.

The following two statements are the classical theorems of Hopf-Rinow and Gauss-Bonnet adapted to our context.

\begin{thm} (\cite{Hodgson}) \label{H-R}
Let $M$ be a complete, connected singular Euclidean surface. Then every pair of points
in $M$ can be joined by a shortest geodesic in $M$. 
\end{thm}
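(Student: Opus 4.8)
The plan is to treat this as the Hopf--Rinow--Cohn--Vossen theorem for length spaces, using that a singular Euclidean surface is a \emph{locally compact} path metric space together with the completeness hypothesis. Local compactness is immediate from the structure: every point $p$ has a neighborhood isometric to a metric ball about the vertex $v_p$ of its tangent cone $T_pM$, and such a ball in a Euclidean cone is compact. The overall strategy has two stages. First I would upgrade completeness plus local compactness to \emph{properness}, i.e.\ the statement that every closed metric ball $\bar B(p,r)$ is compact. Second, with properness in hand, I would realize the distance $d_M(p,q)$ as the length of a limit of a minimizing sequence of paths, extracted via the Arzel\`a--Ascoli theorem.

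For the properness stage, fix $p$ and let $R$ be the supremum of all $r$ for which $\bar B(p,r)$ is compact; local compactness gives $R>0$, and the goal is $R=\infty$. Assuming $R<\infty$, I would first show $\bar B(p,R)$ is itself compact: any sequence in it either has a subsequence lying inside some $\bar B(p,R-\delta)$ (compact by the definition of $R$), or consists of points tending to the sphere of radius $R$, in which case the length-space property lets me replace each point by a nearby point at slightly smaller radius along an almost-minimizing path from $p$, extract a convergent subsequence there, and conclude convergence of the original sequence by completeness. I would then cover the compact set $\bar B(p,R)$ by finitely many balls with compact closures; a uniform amount of extra room, again supplied by the length-space property connecting points just outside radius $R$ to points inside, shows that $\bar B(p,R+\epsilon)$ is a closed subset of a finite union of compact sets for some $\epsilon>0$, hence compact. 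This contradicts the choice of $R$, so $M$ is proper.

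For the existence stage, set $L=d_M(p,q)$ and choose paths $\gamma_n$ from $p$ to $q$ with $\mathrm{length}(\gamma_n)\to L$, each reparametrized with constant speed on $[0,1]$ so that $\gamma_n$ is $\mathrm{length}(\gamma_n)$-Lipschitz. Their images lie in the ball $\bar B(p,L+1)$, which is compact by the first stage, so the family is equicontinuous with relatively compact image; the Arzel\`a--Ascoli theorem yields a subsequence converging uniformly to a path $\gamma$ from $p$ to $q$. Since length is lower semicontinuous under uniform convergence, $\mathrm{length}(\gamma)\leq\liminf\mathrm{length}(\gamma_n)=L$, while $\mathrm{length}(\gamma)\geq d_M(p,q)=L$ always; hence $\gamma$ attains the distance and is therefore a shortest geodesic.

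The main obstacle is the properness step, where the passage from local to global compactness is genuinely delicate and constitutes the real content of Hopf--Rinow; by contrast, the limit-curve argument is routine once the relevant balls are known to be compact. I would also point out the shortcut available in this setting: the definition of a singular Euclidean surface requires $M$ to be a \emph{closed} triangulated surface, so if the triangulation is finite then $M$ is already compact, properness is automatic, and the entire argument collapses to the Arzel\`a--Ascoli step of the previous paragraph.
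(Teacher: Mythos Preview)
The paper does not supply its own proof of this theorem; it simply quotes the result from \cite{Hodgson} as a version of Hopf--Rinow and uses it as a black box. There is therefore nothing in the paper to compare your argument against.

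Your proposal is the standard Hopf--Rinow--Cohn--Vossen argument for locally compact complete length spaces, and it is correct in outline: local compactness of a singular Euclidean surface follows from the cone-neighborhood description, the bootstrap from completeness to properness is the genuine content, and the Arzel\`a--Ascoli extraction of a minimizing limit curve finishes the job. Your closing observation is also well taken: in this paper ``singular Euclidean surface'' means a \emph{closed} triangulated surface (Definition~\ref{defn_cone_manif}), so $M$ is compact to begin with, properness is free, and only the limit-curve step is actually needed for the applications here.
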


\begin{thm} (\cite{Cooper}) \label{G-B}
Let $M$ be a singular Euclidean surface, and let $F$ be a compact
region of $M$. Assume that the interior of $F$ contains $n$ cone
points with cone angles $\theta_{1}$, $\theta_{2}$, ...,
$\theta_{n}$, and that the boundary of $F$ is a geodesic polygon
with corner angles $\alpha_{1}$, $\alpha_{2}$,...$\alpha_{k}$. Then
\[\sum^{n}_{i=1}(2\pi-\theta_{i})+\sum^{k}_{j=1}(\pi-\alpha_{j})=2\pi\chi(F),\]
where $\chi(F)$ is the Euler characteristic of $F$.
\end{thm}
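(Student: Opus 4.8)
The plan is to reduce the statement to a counting argument over a triangulation of $F$ adapted to both the singular Euclidean structure and the geodesic boundary, and then to combine the Euclidean angle-sum identity with Euler's formula. First I would fix a triangulation $\mathcal{T}$ of $F$ refining the given simplicial structure of $M$ so that: every cone point $p_i$ in the interior of $F$ is a vertex; every corner of the boundary polygon is a vertex; and every geodesic side of $\partial F$ is a union of edges of $\mathcal{T}$. Such a refinement exists because each boundary side develops to a straight segment, so it can be subdivided into edges lying inside individual $2$-simplices, and any additional vertices introduced this way are regular (cone angle $2\pi$). By property (1) of Definition \ref{defn_cone_manif}, each $2$-simplex of $\mathcal{T}$ is isometric to a Euclidean triangle, so its three interior angles sum to $\pi$.

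Next I would compute the total sum $S$ of all interior angles of all triangles of $\mathcal{T}$ in two ways. Summing over the $T$ triangles gives $S = \pi T$. Summing instead around each vertex, and using that the cone angle at a vertex is exactly the sum of the incident triangle angles, I would split the vertices into four types: interior regular vertices (angle sum $2\pi$), the $n$ interior cone points $p_i$ (angle sum $\theta_i$), the $k$ boundary corners (angle sum $\alpha_j$), and the remaining boundary vertices lying in the interior of a geodesic side (angle sum $\pi$, since the side is straight there). Writing $r$ for the number of interior regular vertices and $f$ for the number of flat boundary vertices, this yields
\[
\pi T \;=\; 2\pi r + \sum_{i=1}^{n}\theta_i + \sum_{j=1}^{k}\alpha_j + \pi f .
\]

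Then I would bring in the combinatorics. The boundary $\partial F$ is a disjoint union of closed polygonal curves, so its number of edges equals its number of vertices, $E_{\partial} = k + f$; counting triangle--edge incidences gives $3T = 2E_{\mathrm{int}} + E_{\partial}$, hence $E = E_{\mathrm{int}} + E_{\partial} = (3T + E_{\partial})/2$. Substituting into Euler's formula $\chi(F) = V - E + T$ with $V = r + n + k + f$, I would obtain
\[
2\pi\chi(F) \;=\; 2\pi V - \pi T - \pi E_{\partial}.
\]
Inserting the expression for $\pi T$ above together with the values $V = r + n + k + f$ and $E_{\partial} = k + f$, the regular-vertex contributions $2\pi r$ and the flat-boundary contributions cancel, leaving exactly $\sum_{i}(2\pi-\theta_i) + \sum_{j}(\pi-\alpha_j)$.

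The only genuinely delicate point is the first step: constructing a triangulation that simultaneously respects the flat structure, places all cone points at vertices, and realizes each geodesic side of $\partial F$ as a straight edge-path so that the non-corner boundary vertices really do have angle sum $\pi$ (in particular, one must know that the interiors of the sides contain no cone points, any such point being counted instead as a corner). Once this compatible triangulation is in hand, the remaining steps are purely the Euclidean angle identity and bookkeeping with Euler's formula, and no further geometric input about $M$ is needed.
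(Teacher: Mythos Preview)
Your argument is correct; the double counting of angles against Euler's formula is the standard combinatorial proof of Gauss--Bonnet for piecewise-flat surfaces, and your bookkeeping checks out line by line (in particular the identity $2\pi\chi(F)=2\pi V-\pi T-\pi E_{\partial}$ and the subsequent cancellation are right). Note, however, that the paper does not supply its own proof of this theorem: it is quoted as a known result from \cite{Cooper}, so there is no in-paper argument to compare yours against. Your proof is exactly the kind of elementary triangulation argument one would expect behind the cited statement, and the caveat you flag about arranging the triangulation so that boundary sides are edge-paths free of interior cone points is the only point requiring care.
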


\section{Hex Spheres}

\begin{defn} \label{hex_spher}
A \emph{hex sphere} is an oriented singular Euclidean sphere with $4$ cone points whose cone angles are integer multiples of $\frac{2\pi}{3}$ but less than $2\pi$.
\end{defn}

Examples of hex spheres are given in the introduction of this paper.

\noindent $\diamond$ \emph{Why cone angles that are multiples of $\frac{2\pi}{3}$?}
Singular Euclidean surfaces with these cone angles arise naturally as limits at infinity of real projective structures. Real projective structures have been studied extensively by many authors, including \cite{Goldman}, \cite{Choi-Goldman}, \cite{Loftin}, \cite{Labourie} and \cite{Hitchin}.

\noindent $\diamond$ \emph{Why $4$ cone points?} The following lemma shows that there is only one singular Euclidean sphere with $3$ cone points whose cone angles satisfy the numeric restrictions we are interested in. This suggests studying the next simplest case (when the singular sphere has $4$ cone points).

\begin{lem} \label{lem-hex-3-cone}
Let $M$ be a singular Euclidean sphere with $k$ singular points and assume that the cone angle of $M$ at every singular point is an integer multiple of $\frac{2\pi}{3}$. Then $k \geq 3$, and if $k=3$ then $M$ is the double of an Euclidean equilateral triangle.
\end{lem}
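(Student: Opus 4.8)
The plan is to run a Gauss--Bonnet count to pin down $k$ and the cone angles, and then, in the borderline case $k=3$, to reconstruct the triangle geometrically by cutting $M$ along shortest geodesics between the cone points. First I would write the cone angle at the $i$-th singular point as $\theta_i = m_i\,\tfrac{2\pi}{3}$ with $m_i$ a positive integer (and $m_i\neq 3$, since the point is singular). Applying Theorem~\ref{G-B} to $F=M$ — which has empty boundary and $\chi(M)=2$ — gives $\sum_{i=1}^{k}(2\pi-\theta_i)=4\pi$, which simplifies to $\sum_{i=1}^{k}(3-m_i)=6$, i.e. $\sum_{i=1}^{k} m_i = 3k-6$. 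Because each $m_i\geq 1$, the quantity $3k-6$ would be negative for $k=1$ and zero for $k=2$, both impossible; hence $k\geq 3$. When $k=3$ the identity reads $m_1+m_2+m_3=3$, and as each $m_i\geq 1$ this forces $m_1=m_2=m_3=1$, so every cone angle equals $\tfrac{2\pi}{3}$.

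It then remains to show that such an $M$ is the double of an equilateral triangle. Labeling the cone points $p_1,p_2,p_3$, I would invoke Theorem~\ref{H-R} to choose shortest geodesics $a,b,c$ joining $p_2p_3$, $p_1p_3$, $p_1p_2$ respectively. A shortest geodesic cannot pass through a cone point of angle $<2\pi$ in its interior: there the two angles on the two sides of the path would each have to be $\geq\pi$, forcing a cone angle $\geq 2\pi$. Hence the interiors of $a,b,c$ avoid all three singular points. The main obstacle — the one technical step I expect to require real care — is to verify that $a,b,c$ can be arranged to have pairwise disjoint interiors, so that together they form an \emph{embedded} triangle $\Gamma$ whose complement $M\setminus\Gamma$ is a disjoint union of two open disks $D_1,D_2$. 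I would establish this by a minimal-intersection argument: a transverse interior crossing of two of these arcs would yield a broken path that could be strictly shortened, contradicting minimality of the shortest geodesics, and the resulting simple closed curve through the three vertices bounds two disks by the Jordan curve theorem on the sphere.

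Granting embeddedness, each closed disk $\overline{D_i}$ is bounded by the geodesic triangle with corners at $p_1,p_2,p_3$ and contains no cone point in its interior. Applying Theorem~\ref{G-B} to $\overline{D_i}$ (with $\chi=1$, no interior cone points, and corner angles $\alpha_1^i,\alpha_2^i,\alpha_3^i$) gives $\sum_{j}(\pi-\alpha_j^i)=2\pi$, i.e. $\alpha_1^i+\alpha_2^i+\alpha_3^i=\pi$; therefore each $\overline{D_i}$ develops isometrically onto a genuine Euclidean triangle with side lengths $|a|,|b|,|c|$. Since $D_1$ and $D_2$ share these same three side lengths, they are congruent by SSS, so $M$ is the double of the triangle $\overline{D_1}$. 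Finally, the cone angle at $p_j$ is the sum of the two face angles there, $\alpha_j^1+\alpha_j^2=\tfrac{2\pi}{3}$; the SSS-congruence matches corresponding vertices and gives $\alpha_j^1=\alpha_j^2$, so each equals $\tfrac{\pi}{3}$. Thus $\overline{D_1}$ has all angles $\tfrac{\pi}{3}$ and is equilateral, which completes the argument.
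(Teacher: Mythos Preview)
Your argument is correct. The paper does not actually spell out a proof here: it just says the lemma follows from Gauss--Bonnet (Theorem~\ref{G-B}) together with Proposition~4.4 of \cite{Cooper}, so your route is genuinely different in that it is self-contained rather than deferring to an external classification result. The Gauss--Bonnet count you run for $k\geq 3$ and for forcing $m_1=m_2=m_3=1$ when $k=3$ is exactly what is needed. For the second half, your cut-and-double construction is the standard direct argument, and the disjointness step you flag as the ``main obstacle'' is in fact easy in this particular setting because any two of the three shortest arcs share a cone-point endpoint: if, say, $a$ (joining $p_2p_3$) and $b$ (joining $p_1p_3$) met at an interior point $x$, then comparing the two sub-arcs from $x$ to the common endpoint $p_3$ and swapping the shorter one into the other geodesic yields a path from $p_1$ to $p_3$ of length $\leq |b|$ with a genuine corner at the regular point $x$, which can be strictly shortened --- contradicting minimality of $b$. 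After that, your two applications of Theorem~\ref{G-B} to the disks, the SSS congruence, and the angle-splitting $\alpha_j^1=\alpha_j^2=\pi/3$ are all clean. The payoff of your approach is that it exposes the geometry explicitly and needs nothing beyond Theorems~\ref{H-R} and~\ref{G-B} already stated in the paper; the paper's one-line appeal to \cite{Cooper} is terser but opaque.
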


The proof of Lemma \ref{lem-hex-3-cone} follows from Theorem \ref{G-B} and Proposition 4.4 from \cite{Cooper}. Theorem \ref{G-B} also gives the following:

\begin{lem} \label{sizes_con-angles}
Exactly two of the cone angles of a hex sphere equal $\frac{2\pi}{3}$ while the other two equal $\frac{4\pi}{3}$.
\end{lem}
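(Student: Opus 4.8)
The plan is to apply the Gauss-Bonnet theorem (Theorem \ref{G-B}) to the entire hex sphere and read off the number of cone points of each type directly from the resulting linear equation.

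First I would observe that the cone-angle hypothesis severely restricts the possibilities. Since each cone angle is a positive integer multiple of $\frac{2\pi}{3}$ that is strictly less than $2\pi$, the only admissible values are $\frac{2\pi}{3}$ and $\frac{4\pi}{3}$: the next multiple, $3\cdot\frac{2\pi}{3}=2\pi$, is excluded by the hypothesis (and would in any case correspond to a regular point rather than a cone point). Hence each of the four cone points carries cone angle either $\frac{2\pi}{3}$ or $\frac{4\pi}{3}$, with corresponding concentrated curvature $2\pi-\theta$ equal to $\frac{4\pi}{3}$ or $\frac{2\pi}{3}$ respectively.

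Next I would take $F=M$ in Theorem \ref{G-B}. Because $M$ is topologically a sphere, $\chi(M)=2$, and since $M$ is closed the boundary term $\sum_j(\pi-\alpha_j)$ is absent. Writing $a$ for the number of cone points of angle $\frac{2\pi}{3}$ and $b$ for the number of angle $\frac{4\pi}{3}$, Gauss-Bonnet reads
\[
a\cdot\frac{4\pi}{3}+b\cdot\frac{2\pi}{3}=2\pi\,\chi(M)=4\pi,
\]
which simplifies to $2a+b=6$. Combining this with the defining condition $a+b=4$ for a hex sphere yields $a=b=2$.

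I expect essentially no obstacle: the statement is an immediate consequence of Gauss-Bonnet once the admissible cone angles have been identified, so the whole argument reduces to solving a $2\times 2$ linear system. The only point demanding a moment of care is the preliminary step of confirming that $\frac{2\pi}{3}$ and $\frac{4\pi}{3}$ are the sole multiples of $\frac{2\pi}{3}$ lying strictly between $0$ and $2\pi$, which guarantees that the two unknowns $a$ and $b$ genuinely exhaust all cases.
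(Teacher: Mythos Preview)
Your argument is correct and is exactly the approach the paper has in mind: it states the lemma as an immediate consequence of Theorem~\ref{G-B} without writing out a proof, and your computation supplies precisely the omitted details. There is nothing to add.
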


From now on, we will use the following notation:

\begin{itemize}
\item $M$ will be a hex sphere.
\item $a$ and $b$ will denote the two cone points in $M$ of angle $\frac{4\pi}{3}$.
\item  $Equidist(a,b)$  will be the set of all points in $M$ which are \emph{equidistant} from $a$ and $b$.
\item $c$ and $d$ will denote the two cone points in $M$ of angle $\frac{2\pi}{3}$.
\item $\Sigma=\{a,b,c,d\}$ will denote the singular locus of $M$.
\end{itemize}

\begin{thm} \label{holonomy_argument} (The holonomy argument)
With the previous notation, $d_M(a,d)=d_M(b,c)$  and $d_M(a,c)=d_M(b,d)$.
\end{thm}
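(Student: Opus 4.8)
The plan is to exploit the holonomy of the singular Euclidean structure on $M$. Away from the singular locus $\Sigma$, $M$ carries a genuine Euclidean structure, so it has a developing map $D$ into $\mathbb{E}^2$ and a holonomy representation $\rho\colon \pi_1(M\setminus\Sigma)\to \mathrm{Isom}^+(\mathbb{E}^2)$. Identifying $\mathbb{E}^2$ with $\mathbb{C}$, I would record for each cone point $p$ the holonomy $g_p=\rho(\ell_p)$ of a small, positively oriented loop $\ell_p$ around $p$: it is the rotation $\mathrm{Rot}(\hat p,\theta_p)$ by the cone angle $\theta_p$ about the developed image $\hat p$ of $p$ (the fixed point of $g_p$). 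By Lemma~\ref{sizes_con-angles}, $\theta_a=\theta_b=\tfrac{4\pi}{3}\equiv-\tfrac{2\pi}{3}$ and $\theta_c=\theta_d=\tfrac{2\pi}{3}\pmod{2\pi}$. Taking the $\ell_p$ to be a standard generating set with the punctures in the cyclic order $a,c,b,d$ around the sphere, the single relation of $\pi_1$ of the four-punctured sphere reads $g_a\,g_c\,g_b\,g_d=\mathrm{id}$.

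The heart of the argument is to extract a symmetry from this relation. Writing $\omega=e^{-2\pi i/3}$, each rotation has the form $g_p(z)=\lambda_p z+(1-\lambda_p)\hat p$ with $\lambda_a=\lambda_b=\omega$ and $\lambda_c=\lambda_d=\omega^2$. Composing the four maps, the linear part is $\omega\cdot\omega^2\cdot\omega\cdot\omega^2=\omega^6=1$, so $g_a g_c g_b g_d$ is a translation, and a direct computation of its translation part gives $(1-\omega)(\hat a+\hat b-\hat c-\hat d)$. Since $1-\omega\neq 0$, the relation forces
\[\hat a+\hat b=\hat c+\hat d.\]
Geometrically this says the four developed cone points form a parallelogram whose diagonals $\hat a\hat b$ and $\hat c\hat d$ share the midpoint $m=\tfrac12(\hat a+\hat b)$; equivalently, the point reflection $\iota\colon z\mapsto 2m-z$ swaps $\hat a\leftrightarrow\hat b$ and $\hat c\leftrightarrow\hat d$. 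Being orientation preserving, $\iota$ conjugates each rotation to the rotation through the same angle about the image of its center, so $\iota g_a\iota^{-1}=g_b$ and $\iota g_c\iota^{-1}=g_d$.

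Finally I would promote this planar symmetry to an isometry of $M$. The peripheral permutation $\psi$ defined on generators by $\ell_a\leftrightarrow\ell_b$, $\ell_c\leftrightarrow\ell_d$ is a genuine automorphism of $\pi_1(M\setminus\Sigma)$: applying it to $\ell_a\ell_c\ell_b\ell_d=1$ yields $\ell_b\ell_d\ell_a\ell_c=1$, which is an immediate consequence of the original relation, and by construction $\rho\circ\psi=\iota\,\rho(\cdot)\,\iota^{-1}$. This equivariance is exactly what is needed so that $\iota$ descends to an isometric involution $\phi\colon M\to M$ realizing the swap $\phi(a)=b$, $\phi(b)=a$, $\phi(c)=d$, $\phi(d)=c$. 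Once $\phi$ is in hand the theorem is immediate, since $d_M(a,c)=d_M(\phi a,\phi c)=d_M(b,d)$ and $d_M(a,d)=d_M(\phi a,\phi d)=d_M(b,c)$. I expect the one genuinely delicate step to be this last descent: on a singular sphere the developing map need not be injective, so I would need to argue carefully, via analytic continuation of $\iota\circ D$ along paths together with the equivariance $\rho\circ\psi=\iota\rho\iota^{-1}$, that $\phi$ is well defined globally on $M$ and not merely locally or on the universal cover. The rotation bookkeeping and the parallelogram identity are routine by comparison.
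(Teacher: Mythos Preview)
Your holonomy setup and the computation leading to the parallelogram identity $\hat a+\hat b=\hat c+\hat d$ are correct and are essentially a repackaging of what the paper does. The paper, however, finishes much more directly: from the $\pi_1$ relation it observes that $hol(\alpha_a\alpha_c)$ and $hol(\alpha_b\alpha_d)$ are the \emph{same} translation (up to inversion), computes its translational length explicitly as $\sqrt{3}\,d_{\mathbb{E}^2}(F_a,F_c)=\sqrt{3}\,d_{\mathbb{E}^2}(F_b,F_d)$, and then identifies $d_{\mathbb{E}^2}(F_p,F_q)$ with $d_M(p,q)$ by choosing the peripheral generators along shortest geodesics. No involution of $M$ is ever constructed. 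You could reach the same endpoint immediately from your own identity: $\hat a+\hat b=\hat c+\hat d$ gives $\hat a-\hat c=\hat d-\hat b$ and $\hat a-\hat d=\hat c-\hat b$, hence $|\hat a-\hat c|=|\hat b-\hat d|$ and $|\hat a-\hat d|=|\hat b-\hat c|$, and then the identification of these Euclidean distances with $d_M$ is the same step the paper uses.

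The genuine gap in your version is exactly the one you flag: promoting the planar symmetry $\iota$ to an isometric involution $\phi$ of $M$. Knowing that an abstract automorphism $\psi$ of $\pi_1(M\setminus\Sigma)$ satisfies $\rho\circ\psi=\iota\,\rho\,\iota^{-1}$ does \emph{not} by itself produce a map of $M$. You would first need $\psi$ to be induced by a self-homeomorphism of the punctured sphere (a Dehn--Nielsen--Baer type statement, with attention to peripheral structure), and then you would need a rigidity statement saying that two Euclidean cone structures with conjugate holonomy are isometric---which is not automatic for flat cone surfaces. Both steps are true here but neither is trivial, and together they are considerably heavier than the theorem you are trying to prove. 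The paper's route sidesteps all of this: it never leaves the holonomy computation.
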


\begin{proof}
We prove only that $d_M(a,d)=d_M(b,c)$. Choose a base triangle $T_0$ in the triangulation of $M$, a base point $x_0 \in T_0 \, \backslash \, \Sigma$ and an isometry $f_0$ from $T_0$ to
the Euclidean plane $\mathbb{E}^2$. Consider the developing map
$dev \colon \widehat{M} \to \mathbb{E}^2$ associated to the pair
$(T_0,f_0)$, and let $hol \colon G \cong \pi_1(M \, \backslash \, \Sigma, x_0)
\to Isom(\mathbb{E}^2)$ be the corresponding holonomy homomorphism (see \cite{Troyanov1}). For each singular point $s$, let
$\alpha_s$ be a loop in $ M \, \backslash \, \Sigma$ based at $x_0$
which links only the cone point $s$, so that the homotopy classes of the loops $\alpha_a$, $\alpha_b$, $\alpha_c$ and
$\alpha_d$ generate the group $G \cong \pi_1(M \, \backslash \, \Sigma, x_0)$.

Since $M$ is a sphere, then $hol([\alpha_a \cdot \alpha_c])=hol([\alpha_b \cdot \alpha_d])$, where $[\sigma]$ and $\cdot$  denote the homotopy class of the path $\sigma$ and concatenation of paths (respectively). Also, $hol([\alpha_a])$, $hol([\alpha_b])$, $hol([\alpha_c])$ and $hol([\alpha_d])$ are rotations on the Euclidean plane $\mathbb{E}^2$, the first two of angle $\frac{4\pi}{3}$ and the last two of angle $\frac{2\pi}{3}$. For each singular point $p$, let $F_p$ be the fixed point of the rotation $hol([\alpha_p])$. Using Euclidean geometry, the reader can check that the isometry $hol([\alpha_a \cdot \alpha_c])$ is a translation on $\mathbb{E}^2$ with translational length $r=\sqrt{3} \, d_{\mathbb{E}^2}(F_a, F_c)$, where $d_{\mathbb{E}^2}$ denotes the distance on $\mathbb{E}^2$. Since $d_M(a,c)=d_{\mathbb{E}^2}(F_a, F_c)$ then the translational length of $hol([\alpha_a \cdot \alpha_c])$ equals $\sqrt{3} \, d_M(a,c)$. Similarly, the translational length of $hol([\alpha_b \cdot \alpha_d])$ equals $\sqrt{3} \, d_M(b,d)$. Thus, $hol([\alpha_a \cdot \alpha_c])=hol([\alpha_b \cdot \alpha_d])$ implies that $d_M(a,c)=d_M(b,d)$.
\end{proof}

\section{The Voronoi Regions of $M$ and the Voronoi Graph $\Gamma$}

\begin{defn} \label{defn-vor-reg}
 The (open) \emph{Voronoi region} $Vor(a)$ centered at $a$ is the set of points in $M$ consisting of:
\begin{itemize}
 \item the cone point $a$, and
 \item all non-singular points $x$ in $M$ such that 
  \begin{enumerate}
   \item $d_M(a,x) < d_M(b,x)$ and
   \item there exists a \emph{unique} shortest geodesic from $x$ to $a$.
  \end{enumerate}
\end{itemize}
\end{defn}

The (open) Voronoi region $Vor(b)$ centered at $b$ is defined by swapping the roles of $a$ and $b$ in Definition \ref{defn-vor-reg}. 

\begin{prop} \label{cut-loc} The Voronoi regions $Vor(a)$ and $Vor(b)$ are locally polyhedral and all of their corner angles are less than or equal to $\pi$.
\end{prop}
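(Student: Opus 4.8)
The plan is to analyze the boundary of the Voronoi region $Vor(a)$ by understanding it as a subset of the cut locus of $a$. The key observation is that the boundary of $Vor(a)$ consists of points that are either equidistant from $a$ and $b$, or points admitting multiple shortest geodesics to $a$ (the latter possibly including points where a second cone point interferes). First I would recall that, by the Hopf-Rinow theorem (Theorem \ref{H-R}), every point of $M$ is joined to $a$ by at least one shortest geodesic, so the failure of condition (2) in Definition \ref{defn-vor-reg} is precisely the existence of \emph{two or more} distinct shortest geodesics to $a$. The boundary $\partial Vor(a)$ is therefore contained in the union of $Equidist(a,b)$ with the cut locus of $a$ relative to the other singular points.

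The main structural step is to show that $\partial Vor(a)$ is \emph{locally polyhedral}, i.e., locally a finite union of geodesic segments. For this I would argue pointwise. Take a point $x \in \partial Vor(a)$ that is nonsingular and lying on the equidistant set $Equidist(a,b)$. Near such a point, lift the two competing shortest geodesics (one toward $a$, one toward $b$) to the Euclidean plane via the developing map; locally $M$ is flat, so the equidistant locus is the perpendicular bisector of the two developed image points, which is a straight Euclidean segment. Similarly, at a nonsingular point $x$ where two distinct shortest geodesics run to $a$ itself, the locus of such points is again, locally, a perpendicular-bisector segment between the two developed images of $a$. Hence away from the finitely many singular points $\{c,d\}$ and from the finitely many ``junction'' points where three or more bisector arcs meet, the boundary is a finite union of straight segments. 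The finiteness I would get from the fact that $M$ is compact and built from finitely many triangles, so only finitely many combinatorial types of shortest-geodesic configuration arise; this rules out accumulation of corners.

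Next I would establish the angle bound, that every corner angle of $Vor(a)$ is at most $\pi$. The cleanest route is \emph{convexity toward the center}: I claim $Vor(a)$ is star-shaped (indeed its image develops as a star-shaped region) with respect to $a$, because if $x \in Vor(a)$ then the unique shortest geodesic from $x$ to $a$ stays inside $Vor(a)$ — every interior point $y$ of that geodesic satisfies $d_M(a,y) = d_M(a,x) - d_M(x,y) < d_M(b,x) - d_M(x,y) \le d_M(b,y)$, using the triangle inequality, and uniqueness of the shortest geodesic to $a$ is inherited along subsegments. A corner of interior angle greater than $\pi$ would create a boundary point from which the region bulges outward in a way incompatible with this star-shaped, developed picture: developing a neighborhood of such a corner into the tangent cone at the cone angle $\tfrac{4\pi}{3}$ would force the shortest-geodesic structure to break, contradicting that the corner lies on a bisector where both sides are genuinely closer to $a$ than to the competitor. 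Concretely, I would compute the interior angle at a bisector corner as the angle subtended between two developed segments and check it is at most $\pi$ by a reflection argument across the bisector.

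\textbf{The main obstacle} I anticipate is the careful treatment of the corner points themselves, and in particular the interaction with the two cone points $c$ and $d$ of angle $\tfrac{2\pi}{3}$. Away from $c$ and $d$ the perpendicular-bisector description is routine Euclidean geometry, but a corner of $\partial Vor(a)$ may sit at $c$ or $d$, or a bisector segment may terminate at one of these cone points, where the local geometry is a cone rather than a plane. There the angle computation must be done inside the tangent cone $T_cM$ (of cone angle $\tfrac{2\pi}{3} < \pi$), and I would need to verify that even the reduced cone angle cannot produce a reflex corner of $Vor(a)$ — intuitively the small cone angle only makes the region ``sharper,'' reinforcing the $\le \pi$ bound rather than threatening it, but this deserves an explicit check rather than an appeal to the flat case. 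Pinning down that the cone points and finitely many triple-junctions exhaust all possible corners, so that ``locally polyhedral'' genuinely means a locally finite geodesic polygon, is the part I would write most carefully.
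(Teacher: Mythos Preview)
The paper does not actually prove this proposition: it explicitly omits the proof, saying only that it ``uses the same ideas from the proof of Proposition 3.14 in \cite{Cooper}.'' So there is no in-paper argument to compare against; the intended proof is the standard cut-locus analysis from \cite{Cooper}, and your outline is broadly that argument --- boundary points are characterized by multiple shortest geodesics to $\{a,b\}$, and locally the competing distance conditions cut out perpendicular-bisector segments in the developed picture.

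One comment on your angle bound: star-shapedness with respect to $a$ is true and worth recording, but by itself it does \emph{not} force corner angles $\le \pi$ (a star-shaped planar region can have reflex corners). The clean argument, and the one implicit in \cite{Cooper}, is purely local at the corner $x$: develop a neighborhood of $x$ into $T_xM$, list the finitely many developed images $a_1,\dots,a_k$ of $a$ and $b$ reached by the competing shortest geodesics, and observe that $Vor(a)$ near $x$ is cut out by inequalities $d(\cdot,a_i)\le d(\cdot,a_j)$, each of which defines a half-plane. An intersection of half-planes through $x$ is convex, so the interior angle is $\le\pi$. This also handles your ``main obstacle'' uniformly: when $x\in\{c,d\}$ the tangent cone has total angle $2\pi/3<\pi$, so the corner angle is automatically $\le\pi$ regardless of how the bisectors sit. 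Replacing the star-shapedness paragraph with this half-plane-intersection argument would close the only soft spot in your sketch.
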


We omit the proofs of Proposition \ref{cut-loc} and Lemma \ref{anal_cut_loc} below, as they use the same ideas from the proof of Proposition 3.14 in \cite{Cooper}.

The complement of the Voronoi regions $Vor(a)$ and $Vor(b)$ in $M$ is called the \emph{cut locus} $Cut(M)$ of $M$. 

\begin{lem} \label{anal_cut_loc} The singular locus $Cut(M)$ is a graph embedded in $M$ such that:
\begin{itemize}
 \item its edges are geodesics in $M$;
 \item its vertex set contains $\Sigma \cap Cut(M)$;
 \item the degree of a vertex $v$ of $Cut(M)$ is equal to the (finite) number of shortest geodesics in $M$ from $v$ to the set $\{a,b\}$.
\end{itemize}
\end{lem}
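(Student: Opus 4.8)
The plan is to determine the local structure of $Cut(M)$ at each of its points by developing a small neighborhood into the Euclidean plane (or, at a cone point, into a tangent cone) and identifying $Cut(M)$ locally with a Voronoi-type diagram of finitely many \emph{source} points, namely the developed images of the nearest point(s) of $\{a,b\}$. Once the local picture is understood, compactness of $M$ assembles it into a finite embedded graph.

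First I would record a pointwise description of $Cut(M)$. For a point $x$, let $N(x)$ denote the number of shortest geodesics in $M$ from $x$ to $\{a,b\}$ (those realizing $d_M(x,\{a,b\})=\min\{d_M(x,a),d_M(x,b)\}$). Directly from Definition \ref{defn-vor-reg} one checks that a non-singular point $x$ lies in $Cut(M)$ if and only if $N(x)\geq 2$: if $N(x)=1$ the unique shortest geodesic runs to the strictly closer center, placing $x$ in one open Voronoi region, whereas $N(x)\geq 2$ forces either equidistance or a failure of uniqueness, so $x$ lies in neither region. Since each open Voronoi region contains no singular point other than its own center, the cone points $c$ and $d$ both belong to $Cut(M)$, giving $\Sigma\cap Cut(M)=\{c,d\}$.

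Next I would set up the local model. Because the cone angle at $c$ or $d$ is $\frac{2\pi}{3}<2\pi$, no shortest geodesic can pass through $c$ or $d$ in its interior (one could always shortcut across such a cone point), so every shortest geodesic from $x$ to $\{a,b\}$ lies in the regular part and develops to a straight segment; together with completeness (Theorem \ref{H-R}) and the local polyhedral structure of Proposition \ref{cut-loc}, this also makes $N(x)$ finite. Fixing a non-singular $x\in Cut(M)$, I would develop a small neighborhood and let $s_1,\dots,s_k$ (with $k=N(x)$) be the developed images of the nearest center(s); these all lie on the circle of radius $r=d_M(x,\{a,b\})$ about the image of $x$, and nearby the function $y\mapsto d_M(y,\{a,b\})$ equals $\min_i|\,\cdot-s_i|$. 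The key step is to identify $Cut(M)$ near $x$ with the Voronoi diagram of $\{s_1,\dots,s_k\}$: a nearby point lies in $Cut(M)$ exactly when the minimum is attained by at least two sources. Ordering the cocircular sources cyclically, the bisectors of consecutive pairs develop back to $k$ straight geodesic rays out of $x$; hence $k=2$ makes $x$ an interior point of a single geodesic edge, while $k\geq 3$ makes $x$ a vertex of degree exactly $k=N(x)$.

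I would then treat the cone points $c$ and $d$ by the same argument, with the plane replaced by the tangent cone $T_cM$ (resp.\ $T_dM$). Developing a neighborhood of $c$ into this cone of angle $\frac{2\pi}{3}$, the shortest geodesics to $\{a,b\}$ again develop to segments ending at cocircular sources, and the partition of the link circle of $T_cM$ into nearest-source arcs produces exactly $N(c)$ geodesic edges emanating from $c$; since the cone angle is less than $2\pi$, no two of these edges can combine into a geodesic through $c$, so $c$ is genuinely a vertex of degree $N(c)$. Finally, compactness of $M$, together with the local description (finitely many geodesic edges through each point and isolated branch points), shows that $Cut(M)$ is a finite embedded graph whose edges are geodesics and whose vertices contain $\{c,d\}$, completing the proof. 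I expect the main obstacle to be the local identification of $Cut(M)$ with the Voronoi diagram of the developed sources---in particular verifying that the bisector arcs are genuinely the cut locus and counting the edges correctly at $c$ and $d$, where the development lands in a cone of angle less than $2\pi$ and a shortest geodesic can wrap around the apex on two sides.
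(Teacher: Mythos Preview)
The paper does not actually prove Lemma~\ref{anal_cut_loc}; immediately after stating it, the author writes that the proofs of Proposition~\ref{cut-loc} and Lemma~\ref{anal_cut_loc} are omitted because they ``use the same ideas from the proof of Proposition~3.14 in \cite{Cooper}.'' Your proposal is precisely that standard argument: develop a small neighborhood of a point of $Cut(M)$ into the plane (or into the tangent cone at $c$ or $d$), read off the shortest geodesics to $\{a,b\}$ as straight segments to cocircular source points, and identify $Cut(M)$ locally with the Voronoi diagram of those sources. This is correct and is exactly the method the paper defers to; nothing more is needed.

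Two minor remarks. First, your observation that $c,d\in Cut(M)$ follows immediately from Definition~\ref{defn-vor-reg}, since only \emph{non-singular} points (besides the center) are admitted into a Voronoi region; there is no need to argue about which region they would lie in. Second, the worry you flag at the end---that at a cone of angle $\tfrac{2\pi}{3}$ a shortest geodesic to a single nearest center can reach the apex from two sides---is a genuine phenomenon, but it does not threaten the argument: it simply contributes two distinct sources on the link circle and hence two edges of $Cut(M)$, so the degree still equals $N(v)$ as claimed.
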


Each Voronoi region of $M$ embeds in a certain tangent cone to $M$.

\begin{prop} (\cite{Boileau}, \cite{Cooper}) \label{Vor_emb_in_con} Let $p$ be either $a$ or $b$ and consider the map $f_p \colon Vor(p) \to T_{p}M$ defined by $f_p(x)=[\gamma_{x}'(0), d_{M}(p,x)]$ for $x \in Vor(p)$, where $\gamma_{x}$ is the unique shortest geodesic in $M$ from $p$ to $x$. Then the map $f_p$ is an isometric embedding and its image is the interior of a convex geodesic polygon in $T_{p}M$ that is star-shaped with respect to the vertex $v_p$ of $T_pM$.
\end{prop}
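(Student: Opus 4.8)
The plan is to recognize $f_p$ as essentially the inverse of the exponential map of $M$ at $p$, restricted to the star-shaped set of directions and radii realized by unique shortest geodesics, and then to read off the geometry of the image from the structure of the cut locus. Concretely, I would establish, in order: that $f_p$ is well defined and injective; that $f_p$ is a local isometry; that its image is star-shaped with respect to $v_p$; and finally that this image is the interior of a convex geodesic polygon.

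Well-definedness and injectivity are immediate from the definition of $Vor(p)$. Each $x \in Vor(p) \setminus \{p\}$ is joined to $p$ by a \emph{unique} shortest geodesic $\gamma_x$, so the pair consisting of the direction $\gamma_x'(0)$ in the link of $v_p$ and the radius $d_M(p,x)$ is determined by $x$, and hence $[\gamma_x'(0), d_M(p,x)]$ is a well-defined point of $T_pM$. For injectivity, a geodesic of $M$ issuing from $p$ is determined by its initial direction, since it develops to the radial ray from $v_p$ in that direction; hence $f_p(x) = f_p(y)$ forces $\gamma_x = \gamma_y$ and $d_M(p,x) = d_M(p,y)$, so $x = y$. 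At the center, $f_p(p) = v_p$, and the natural isometry between a neighborhood of $p$ in $M$ and a neighborhood of $v_p$ in $T_pM$ shows that $f_p$ is an isometry near $p$.

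The technical heart is showing $f_p$ is a local isometry at each $x \neq p$. Since every cone angle of $M$ is less than $2\pi$, a shortest geodesic cannot pass through a cone point in its interior (one could otherwise shortcut around it); thus the interior of $\gamma_x$, together with its non-singular endpoint $x$, avoids $\Sigma$. Choosing a flat tubular neighborhood $N$ of $\gamma_x \setminus \{p\}$, I would develop $N$ isometrically into $T_pM$; the development carries $\gamma_x$ to the radial segment from $v_p$ in direction $\gamma_x'(0)$ and restricts to an isometry from a neighborhood of $x$ onto a neighborhood of $f_p(x)$. The main obstacle is to check that this developing map \emph{coincides with} $f_p$ near $x$: for this one needs that for $x'$ near $x$ the unique shortest geodesic $\gamma_{x'}$ stays $C^0$-close to $\gamma_x$ and develops inside the same chart $N$, which follows from the continuity of shortest geodesics on the open set $Vor(p)$ (where they are unique) together with compactness of $M$ and Hopf--Rinow (Theorem \ref{H-R}). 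Granting this, $f_p$ is a local isometry everywhere; being also injective, it is an isometric embedding onto an open subset of $T_pM$.

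Finally I would analyze the image. For star-shapedness, let $q$ denote the other of the two centers $a,b$; if $x \in Vor(p)$ and $z = \gamma_x(s)$ with $0 < s < d_M(p,x)$, then $\gamma_x|_{[0,s]}$ is the unique shortest geodesic from $p$ to $z$ (any competitor would produce a broken shortest path to $x$, which could be shortcut), the point $z$ is non-singular because it lies in the interior of $\gamma_x$, and the triangle inequality together with $d_M(q,x) > d_M(p,x)$ gives $d_M(q,z) > d_M(p,z)$; hence $z \in Vor(p)$. Thus $Vor(p)$ is star-shaped about $p$, and since $f_p$ sends each $\gamma_x$ to a radial ray, $f_p(Vor(p))$ is star-shaped about $v_p$. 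The boundary $\partial Vor(p)$ lies in the cut locus, which by Lemma \ref{anal_cut_loc} is a finite graph whose edges are geodesics; developing these into the flat cone produces finitely many straight segments, so the image is bounded by a geodesic polygon. By Proposition \ref{cut-loc} every corner angle is at most $\pi$, and a region of $T_pM$ that is star-shaped about $v_p$ and bounded by geodesics with all interior angles at most $\pi$ is convex; this identifies the induced path metric on the image with the restriction of the cone metric and completes the proof that $f_p$ is an isometric embedding with the asserted image.
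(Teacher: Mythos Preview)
The paper does not supply its own proof of this proposition; it is quoted directly from \cite{Boileau} and \cite{Cooper}, so there is nothing in the text to compare your argument against. Your proposal is essentially the standard argument one finds in those references---identify $f_p$ with the inverse of the exponential map, use uniqueness of shortest geodesics on $Vor(p)$ for well-definedness and continuity, develop a flat strip along $\gamma_x$ to see that $f_p$ is a local isometry near $x$, deduce star-shapedness from the sub-arc property of minimizing geodesics together with the fact that no shortest geodesic has a cone point in its interior, and finally read the polygonal structure and local convexity of the boundary off Lemma~\ref{anal_cut_loc} and Proposition~\ref{cut-loc}---and it is correct.

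One small point worth tightening in your last paragraph: $f_p$ does not extend to a single-valued continuous map on $\overline{Vor(p)}$, since a boundary point with several shortest geodesics to $p$ contributes several distinct points to $\partial A$; ``developing the edges of the cut locus into the cone'' really means extending $f_p$ along each incident boundary edge via a consistent choice of shortest geodesic, and it is these edge-by-edge extensions that trace out the sides of the polygon in $T_pM$. With that understood, the closure of the image is indeed a geodesic polygon whose corner angles are at most $\pi$, and your argument goes through.
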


We will use the following notation:
\begin{itemize}
 \item $A$ will be the closure of $f_a(Vor(a))$ in $T_aM$.
 \item $B$ will be the closure of $f_b(Vor(b))$ in $T_bM$.
 \item $\sqcup$ denotes the disjoint union of sets.
 \item $\partial(\cdot)$ and  $\text{int}(\cdot)$ denote the boundary and the interior of $\cdot$ in the appropriate tangent cone.
\end{itemize}

By Lemma \ref{anal_cut_loc} and Proposition \ref{Vor_emb_in_con}, the hex sphere $M$ can be recovered from $A$ and $B$ by identifying the edges of $\partial A$ and $\partial B$ in pairs (an edge of $\partial A$ can be identified to another edge in $\partial A$). Therefore, there is a surjective quotient map $\pi \colon A \sqcup  B \to M$, which is injective in $\text{int}(A) \sqcup \text{int}(B)$. By abuse of notation, the disks $A$ and $B$ will also be called Voronoi cells.

\begin{defn} \label{defn_gamma}
The \emph{Voronoi graph} $\Gamma$ of a hex sphere $M$ is defined by \[\Gamma=\pi( \partial A \sqcup \partial B).\]
\end{defn}

It is easy to see that the graph $\Gamma$ is connected, that it contains the set $Equidist(a,b)$, and that the cone points $c$ and  $d$ are vertices of $\Gamma$.

The proof of the next proposition follows from the definitions.

\begin{prop} \label{equidist(a,b)} If $x \in \pi(\partial A)$, then $d_M(a,x) \leq d_M(b,x)$. Further, the set $Equidist(a,b)$ contains $\pi (\partial A) \cap \pi (\partial B)$.
\end{prop}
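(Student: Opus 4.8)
The plan is to derive both assertions directly from the definition of the Voronoi region together with the continuity of the distance function $d_M$ on the path metric space $M$, passing the strict inequality defining $Vor(a)$ to a non-strict one in a limit.

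\emph{First assertion.} First I would record that, by Proposition \ref{Vor_emb_in_con}, the set $\text{int}(A) = f_a(Vor(a))$ is the interior of the convex geodesic polygon $A$, so that $A = \overline{\text{int}(A)}$ and hence every point of $\partial A$ is a limit of points of $\text{int}(A)$. Since $\pi$ is injective on $\text{int}(A)$ and restricts there to $f_a^{-1}$, we have $\pi(\text{int}(A)) = Vor(a)$. Now fix $x \in \pi(\partial A)$ and choose $\tilde x \in \partial A$ with $\pi(\tilde x) = x$. Picking a sequence $\tilde x_n \in \text{int}(A)$ with $\tilde x_n \to \tilde x$, the points $x_n := \pi(\tilde x_n)$ lie in $Vor(a)$ and, by continuity of $\pi$, converge to $x$. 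By Definition \ref{defn-vor-reg}, each $x_n$ satisfies $d_M(a, x_n) \le d_M(b, x_n)$ (the inequality is strict at the non-singular points of $Vor(a)$, and holds trivially at the cone point $a$, where the left-hand side is $0$). Since $d_M$ is continuous, letting $n \to \infty$ yields $d_M(a,x) \le d_M(b,x)$.

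\emph{Second assertion.} For the inclusion that $Equidist(a,b)$ contains $\pi(\partial A) \cap \pi(\partial B)$, I would simply apply the first assertion and its mirror image obtained by interchanging the roles of $a$ and $b$ (equivalently, of $A$ and $B$). Given $x \in \pi(\partial A) \cap \pi(\partial B)$, the first assertion gives $d_M(a,x) \le d_M(b,x)$, while the same argument applied to $B$ gives $d_M(b,x) \le d_M(a,x)$. Combining these forces $d_M(a,x) = d_M(b,x)$, that is, $x \in Equidist(a,b)$.

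The step I expect to require the most care---though it is hardly a genuine obstacle, consistent with the remark that the result ``follows from the definitions''---is the limit argument underlying the first assertion. Its two ingredients must be cited cleanly: that $A$ is the closure of its interior, so boundary points are genuinely approximable from inside $Vor(a)$ (this comes from Proposition \ref{Vor_emb_in_con} and the definition of $A$ as a closure), and that $d_M$ is continuous on $M$ (from the path-metric structure). The necessity of taking a limit is precisely why the conclusion of the first assertion is stated with $\le$ rather than the strict $<$ appearing in Definition \ref{defn-vor-reg}.
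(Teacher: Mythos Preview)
Your argument is correct and is precisely the natural unfolding of the paper's one-line remark that the proposition ``follows from the definitions.'' The paper gives no further details, and your limit argument via the continuity of $d_M$ together with the symmetric application to $B$ is exactly what is needed.
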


Lemma \ref{anal_cut_loc} and Theorem \ref{H-R} immediately imply the following lemma.

\begin{lem} \label{lem_vert_deg_1}
 Let $v$ be a vertex of the graph $\Gamma$. If the degree of $v$ in $\Gamma$ is equal to $1$, then $v$ is \emph{not} a point in $Equidist(a,b)$.
\end{lem}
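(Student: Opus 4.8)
The plan is to argue by contrapositive, exploiting the identification of the Voronoi graph $\Gamma$ with the cut locus $Cut(M)$ together with the degree formula recorded in Lemma \ref{anal_cut_loc}. Since the quotient map $\pi$ is injective on $\text{int}(A) \sqcup \text{int}(B)$ and carries these interiors onto $Vor(a)$ and $Vor(b)$, the image $\Gamma = \pi(\partial A \sqcup \partial B)$ is precisely the complement of the open Voronoi regions, namely $Cut(M)$. Hence a vertex $v$ of $\Gamma$ is a vertex of $Cut(M)$, and the degree formula applies: the degree of $v$ equals the number of shortest geodesics in $M$ from $v$ to the set $\{a,b\}$.

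Assuming $v \in Equidist(a,b)$, I would first record that $d_M(a,v) = d_M(b,v)$. Applying the Hopf-Rinow theorem (Theorem \ref{H-R}) twice---once to the pair $\{v,a\}$ and once to the pair $\{v,b\}$---produces a shortest geodesic from $v$ to $a$ and a shortest geodesic from $v$ to $b$. Because $a \neq b$, these two geodesics terminate at distinct points and are therefore distinct elements of the set of shortest geodesics from $v$ to $\{a,b\}$. Consequently that set has at least two members.

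Combining this with the degree formula from Lemma \ref{anal_cut_loc}, the degree of $v$ in $\Gamma$ is at least $2$. Taking the contrapositive yields the lemma: if the degree of $v$ equals $1$, then $v$ cannot lie in $Equidist(a,b)$.

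The argument is short, so there is no serious obstacle; the only points requiring care are the identification $\Gamma = Cut(M)$, which justifies transporting the degree formula of Lemma \ref{anal_cut_loc} to vertices of $\Gamma$, and the observation that two shortest geodesics with distinct endpoints $a$ and $b$ are genuinely distinct geodesics rather than a single path counted twice.
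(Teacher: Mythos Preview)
Your argument is correct and is exactly the approach the paper intends: the text simply states that the lemma follows immediately from Lemma~\ref{anal_cut_loc} and Theorem~\ref{H-R}, and you have spelled out precisely how (contrapositive, equidistance gives two distinct shortest geodesics to $\{a,b\}$, hence degree at least $2$). The only point worth tightening is to note explicitly that equidistance makes each of the two geodesics a shortest geodesic to the \emph{set} $\{a,b\}$, not merely to $a$ or $b$ individually, but this is implicit in what you wrote.
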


Let $v \in M$ be a vertex of $\Gamma$. By the proof of  Proposition 3.14 in \cite{Cooper}, there is a neighborhood of $v$ in $M$ that is obtained by gluing some corners $C_1, C_2, \cdots , C_k$ of $A \sqcup B$ along edges, where $k$ is the degree
of the vertex $v$ of the graph $\Gamma$. Let $\theta_1, \theta_2, \cdots
, \theta_k$ be the angles at the corners $C_1, C_2, \cdots , C_k$
(respectively). Since the polygons $A$ and $B$ are convex, then
$\theta_i<\pi$ for each $i$, and thus we obtain that the \emph{cone
angle} of $v$ $=\sum^k_{i=1} \theta_i < k \pi$. In particular, if
$v$ is a non-singular point in $M$, then the cone angle at $v$ is
equal to $2 \pi$ , and so we obtain that $k \geq 3$. This shows the
following:

\begin{obs} \label{degree_leq_2}
If $v \in M$ is a vertex of the graph $\Gamma$ of degree $\leq 2$,
then $v$ is a cone point of angle $\frac{2 \pi}{3}$. In particular, the graph
$\Gamma$ contains at most two vertices of degree $\leq 2$.
\end{obs}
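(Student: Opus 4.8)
The plan is to study the local geometry of $M$ at a vertex $v$ of $\Gamma$, comparing the cone angle at $v$ with the number of corners of $A \sqcup B$ that are glued together to form a neighborhood of $v$. The two essential inputs are the convexity of the Voronoi cells (Proposition \ref{Vor_emb_in_con}) and the fact that the centers $a$ and $b$ sit in the interiors of their own Voronoi regions and so cannot lie on $\Gamma$.

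First I would fix a vertex $v$ of $\Gamma$ whose degree is $k \le 2$. A neighborhood of $v$ in $M$ is assembled from $k$ corners $C_1, \dots, C_k$ of $A \sqcup B$, with corner angles $\theta_1, \dots, \theta_k$, so that the cone angle of $M$ at $v$ equals $\sum_{i=1}^{k} \theta_i$. Because $A$ and $B$ are convex geodesic polygons (Proposition \ref{Vor_emb_in_con}), every $\theta_i$ is strictly less than $\pi$, whence
\[
\sum_{i=1}^{k} \theta_i \;<\; k\pi \;\le\; 2\pi .
\]
Since a regular point of $M$ has cone angle exactly $2\pi$, this strict inequality shows that $v$ must be singular, i.e. $v \in \{a,b,c,d\}$.

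It remains to exclude the two cone points of angle $\frac{4\pi}{3}$. The point $a$ lies in the open region $Vor(a) = \pi(\mathrm{int}\, A)$, which is disjoint from $\pi(\partial A)$; moreover $a \notin \pi(\partial B)$, since every point of $\pi(\partial B)$ is at least as close to $b$ as to $a$ (Proposition \ref{equidist(a,b)}), whereas $a$ is strictly closer to itself. Hence $a$ is not a vertex of $\Gamma$, and symmetrically neither is $b$. The only surviving possibilities are $c$ and $d$, both of cone angle $\frac{2\pi}{3}$, which proves the first claim. The ``in particular'' statement then follows at once: Lemma \ref{sizes_con-angles} guarantees exactly two cone points of angle $\frac{2\pi}{3}$, so $\Gamma$ has at most two vertices of degree $\le 2$.

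The routine parts are the angle bound and the counting; the point requiring genuine attention is distinguishing cone angle $\frac{2\pi}{3}$ from $\frac{4\pi}{3}$ among the singular vertices. The inequality alone does not suffice here, because a degree-$2$ singular vertex of cone angle $\frac{4\pi}{3}$ would still satisfy $\frac{4\pi}{3} < 2\pi$; this is precisely why one must separately verify that the angle-$\frac{4\pi}{3}$ cone points $a$ and $b$ do not lie on $\Gamma$. The other subtlety worth checking carefully is that the degree $k$ of $v$ in $\Gamma$ coincides exactly with the number of glued corners, which is where the cited local structure of the Voronoi cells is used.
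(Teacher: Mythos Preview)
Your proof is correct and follows essentially the same approach as the paper: both use convexity of the Voronoi cells to bound the cone angle at $v$ by $k\pi$, forcing $v$ to be singular when $k \le 2$. You are in fact more explicit than the paper on one point---you spell out why $a$ and $b$ cannot lie on $\Gamma$, whereas the paper leaves this implicit (it is immediate from the definitions, since $a \in Vor(a)$, $b \in Vor(b)$, and $\Gamma$ is the complement of the open Voronoi regions).
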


For the rest of this article we will use the following notation:
\begin{itemize}
\item $n \geq 2$ will be the number of vertices of the graph $\Gamma$;
\item $p$, $q$ will be the number of edges on $\partial A$, $\partial B$ (respectively).
\end{itemize}

Using Theorem \ref{G-B} and elementary combinatorics we obtain the following proposition.

\begin{prop} \label{num_cond}
The numbers $p$, $q$ and $n$ satisfy the following:
\begin{itemize}
\item $p \geq 2$, $q \geq 2$ and $p+q$ is even;
\item $n=\frac{p+q}{2}$ equals the number of edges of the graph
$\Gamma$;
\item $p+q \leq 8$.
\end{itemize}
\end{prop}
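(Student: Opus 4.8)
The plan is to derive all three statements from the Gauss--Bonnet theorem (Theorem \ref{G-B}) applied to each Voronoi cell, an Euler-characteristic count for the decomposition of $M$ along $\Gamma$, and a degree-counting (handshake) argument.

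First I would dispatch the essentially combinatorial parts. Since $M$ is recovered from $A \sqcup B$ by identifying the $p+q$ boundary edges in pairs, and each edge belongs to exactly one pair, the total $p+q$ is even and the number of edges of $\Gamma$ is exactly $\frac{p+q}{2}$. For the lower bounds I would apply Theorem \ref{G-B} to $Vor(a)$: its interior contains only the cone point $a$, of angle $\frac{4\pi}{3}$, and its boundary is a geodesic $p$-gon with corner angles $\alpha_1,\dots,\alpha_p$, each at most $\pi$ by Proposition \ref{cut-loc}. Since $Vor(a)$ is a disk, $\chi=1$ and Gauss--Bonnet gives
\[\frac{2\pi}{3}+\sum_{j=1}^{p}(\pi-\alpha_j)=2\pi, \qquad\text{so}\qquad \sum_{j=1}^{p}(\pi-\alpha_j)=\frac{4\pi}{3}.\]
Each summand is nonnegative and strictly less than $\pi$, so $\frac{4\pi}{3}<p\pi$ forces $p\ge 2$; the same computation on $Vor(b)$ gives $q\ge 2$.

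Next I would pin down $n$. The graph $\Gamma$ cuts $M$ into exactly two open $2$-cells, namely $Vor(a)$ and $Vor(b)$, so $\Gamma$ together with these cells is a CW structure on the sphere with $n$ vertices, $\frac{p+q}{2}$ edges, and $2$ faces. Euler's formula $V-E+F=2$ becomes $n-\frac{p+q}{2}+2=2$, which yields $n=\frac{p+q}{2}$ and simultaneously identifies $n$ with the edge count.

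The step carrying the real content is the upper bound $p+q\le 8$. By the neighborhood description preceding Observation \ref{degree_leq_2}, the degree of a vertex $v$ of $\Gamma$ equals the number of corners of $A\sqcup B$ glued together at $v$; summing over all vertices, the handshake lemma gives $\sum_v \deg(v)=p+q$. The only singular points of $M$ on $\Gamma$ are $c$ and $d$, since $a$ and $b$ lie in the interiors of the cells by Proposition \ref{Vor_emb_in_con}; hence of the $n$ vertices exactly two are cone points and the other $n-2$ are regular. By Observation \ref{degree_leq_2} each regular vertex has degree at least $3$, while $c$ and $d$ have degree at least $1$, so
\[p+q=\sum_v\deg(v)\ \ge\ 3(n-2)+2\ =\ 3n-4.\]
Substituting $n=\frac{p+q}{2}$ turns this into $p+q\ge \frac{3}{2}(p+q)-4$, i.e. $p+q\le 8$. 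The main thing to get right here is that $c$ and $d$ are the \emph{only} vertices that can have degree below $3$, so that the bound $\deg\ge 3$ legitimately applies to all $n-2$ remaining vertices in the handshake sum.
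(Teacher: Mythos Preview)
Your proof is correct and follows exactly the approach the paper indicates: the paper omits the details and simply states that the proposition follows from Theorem~\ref{G-B} and elementary combinatorics, and your argument supplies precisely those ingredients (Gauss--Bonnet on each cell for $p,q\ge 2$, edge-pairing and Euler's formula for $n=\frac{p+q}{2}$, and the handshake count combined with Observation~\ref{degree_leq_2} for $p+q\le 8$). The one cosmetic remark is that your handshake identity $\sum_v\deg(v)=p+q$ can be read off directly as $2E=2\cdot\frac{p+q}{2}$ without invoking the corner description, but either route is fine.
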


We now prove that there is \emph{only} one cycle in the graph $\Gamma$.

\begin{thm} \label{gamma_prop}
The graph $\Gamma$  contains a unique cycle.
\end{thm}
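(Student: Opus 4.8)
The plan is to reduce the statement to a purely combinatorial fact about connected graphs --- namely, that a connected graph whose number of vertices equals its number of edges contains exactly one cycle --- and then to read off the hypothesis $V=E$ from results already established.

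First I would record the two ingredients. By the remark following Definition \ref{defn_gamma}, the graph $\Gamma$ is connected. By our standing notation the number of vertices of $\Gamma$ is $n$, while Proposition \ref{num_cond} asserts that the number of \emph{edges} of $\Gamma$ is also $n=\frac{p+q}{2}$. Hence $\Gamma$ is a connected graph with equally many vertices and edges, say $V(\Gamma)=E(\Gamma)=n$. (If one prefers a geometric derivation of $V=E$, one can instead invoke Euler's formula $V-E+F=2$ for the graph $\Gamma\subset M\cong S^2$: since $M\setminus\Gamma$ is the disjoint union $Vor(a)\sqcup Vor(b)$ of the two open Voronoi regions, there are exactly $F=2$ faces, which again yields $V=E$.)

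Next I would pass to the cycle rank. For a connected graph the first Betti number is $\beta_1=E-V+1$, and here $\beta_1=n-n+1=1$. Since $\beta_1>0$ the graph is not a tree, so $\Gamma$ contains at least one cycle; the content of the theorem is that this cycle is unique. To prove uniqueness I would argue in the cycle space of $\Gamma$ over $\mathbb{F}_2$, whose dimension equals $\beta_1=1$; such a space has exactly one nonzero element. If $\Gamma$ contained two distinct cycles $C_1\ne C_2$, then $C_1$, $C_2$, and their symmetric difference $C_1\triangle C_2$ would be three distinct nonzero vectors in a one-dimensional space, a contradiction. Hence $\Gamma$ has a unique cycle.

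The argument is short because Proposition \ref{num_cond} has already done the real work of pinning down $E=V$; the only point that needs care is this final implication, and in particular the fact that $\Gamma$ may a priori be a multigraph (its edges are geodesic arcs, so loops or pairs of parallel edges are not formally excluded at this stage). The $\mathbb{F}_2$ cycle-space formulation is robust to this, since a loop or a pair of parallel edges is itself a cycle and the identity $\beta_1=E-V+1$ holds for connected multigraphs; this is why I would phrase uniqueness through the cycle space rather than through an ad hoc edge-by-edge inspection.
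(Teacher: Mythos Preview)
Your argument is correct. Both you and the paper first invoke Proposition \ref{num_cond} to get $V(\Gamma)=E(\Gamma)$ and hence that $\Gamma$ is not a tree. The difference lies in the uniqueness step: the paper appeals to Alexander duality for $\Gamma\hookrightarrow S^2$, computing $H_1(\Gamma)\cong H^0(S^2\setminus\Gamma)\cong\mathbb{Z}$ from the fact that the complement has exactly two components (the two open Voronoi cells), whereas you stay intrinsic to the graph and read off $\beta_1=E-V+1=1$ directly, then finish in the $\mathbb{F}_2$ cycle space. Your route is more elementary and avoids the algebraic-topology machinery; the paper's route, on the other hand, makes explicit use of the embedding in the sphere and of the two-cell complement, which is thematically closer to the Voronoi picture being developed. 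Your parenthetical Euler-formula derivation of $V=E$ is in fact the planar-graph shadow of the paper's Alexander duality computation, so the two proofs are cousins. One minor remark: in your contradiction you don't actually need $C_1\triangle C_2$ --- two distinct nonzero vectors already overfill a one-dimensional $\mathbb{F}_2$-space --- but the extra step does no harm.
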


\begin{proof}
By Proposition \ref{num_cond}, the number of vertices of the
connected graph $\Gamma$ equals the number of its edges. Therefore, $\Gamma$ is \emph{not} a tree and so it must contain a cycle. To prove that this cycle is unique, we apply Alexander's Duality to the graph $\Gamma$, which is embedded in the sphere (see \cite{Hatcher} for a statement of Alexander's Duality). If $H$ stands for
the \emph{reduced} homology or cohomology with integer coefficients, then we obtain that $H_1(\Gamma) \cong H^0(S^2 \,
\backslash \, \Gamma) \cong Hom(H_0(S^2 \, \backslash \, \Gamma),
\mathbb{Z})$. Since  $S^2 \, \backslash \, \Gamma$ has two connected
components (the open Voronoi cells), then
$H_0(S^2 \, \backslash \, \Gamma) \cong \mathbb{Z}$, which
implies that $H_1(\Gamma) \cong \mathbb{Z}$. This means that
$\Gamma$ has a unique cycle.
\end{proof}

\begin{notn} \label {plq}
By relabeling the polygons $A$ and $B$ if necessary, we may (and will) suppose that $p \leq q$.
\end{notn}

\section{Analyzing the Possible Values of $p$ and $q$} \label{chapt_Possib_Values_pq}

Proposition \ref{num_cond} and Notation \ref{plq} imply the following:
\begin{obs} \label{obs-pos-val-p}
 The \emph{only} possible values for $p$ are $2$, $3$ and $4$
\end{obs}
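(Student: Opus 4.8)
The plan is to deduce the bound on $p$ purely from the numerical constraints already in hand, with no further geometry required. The substantive input is Proposition \ref{num_cond}, which supplies the inequalities $p \geq 2$, $q \geq 2$, and $p+q \leq 8$; the only additional ingredient is the normalization $p \leq q$ recorded in Notation \ref{plq}. My approach is simply to combine these facts arithmetically.

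First I would read off the lower bound $p \geq 2$ directly from Proposition \ref{num_cond}. For the upper bound I would use the ordering $p \leq q$ to exploit the constraint on the sum: since $p \leq q$ gives $2p \leq p+q$, and Proposition \ref{num_cond} gives $p+q \leq 8$, I obtain $2p \leq 8$, hence $p \leq 4$. Combining the two bounds yields $2 \leq p \leq 4$, so that $p \in \{2,3,4\}$, as asserted.

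The main obstacle is essentially nonexistent at this stage: all of the genuine content --- the Gauss--Bonnet computation and the combinatorics of the Voronoi graph embedded in the sphere --- has already been packaged into Proposition \ref{num_cond}. The one conceptual move is to recognize that the symmetric normalization $p \leq q$ from Notation \ref{plq} is precisely what converts a bound on the \emph{sum} $p+q$ into a bound on the smaller quantity $p$ alone. Everything else is a routine chain of inequalities, and I would expect the proof to occupy no more than a single line.
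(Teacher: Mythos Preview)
Your argument is correct and matches the paper's own justification exactly: the observation is stated immediately after the sentence ``Proposition \ref{num_cond} and Notation \ref{plq} imply the following,'' so the paper too derives it directly from $p\geq 2$, $p\leq q$, and $p+q\leq 8$.
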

If $p$ is either $a$ or $b$, then, by abuse of notation, the vertex $v_p$ of the cone $T_pM$ will also be denoted by $p$. 

We now do a case-by-case analysis of all possible values for $p$ and $q$.

\subsection{The Case $p=2$} \label{subsect-case-p2}

By Proposition \ref{num_cond} and Notation \ref{plq}, the only possible values for $q$ in this case are $2$, $4$ and $6$. We will show that the case $q=2$ is the \emph{only} one that can occur.

\begin{lem} \label{lem_p2_q2}
 Suppose that $p=2$ and $q=2$. Then the graph $\Gamma$ is a cycle on $2$ vertices. Moreover, the disks $A$ and $B$ are  isometric, and each of them satisfies the following:
\begin{enumerate}
 \item its interior contains the vertex of the cone $T_aM$;
 \item its boundary consists of two geodesics in $T_aM$;
 \item each  of its two corner angles equals $\pi/3$.
\end{enumerate}
Further, the hex sphere $M$ is the double of $A$ and it can be recovered from the (planar) isometric polygons $P_A$ and $P_B$ from Figure \ref{summ_2}  by identifying pairs of edges on their boundaries as shown in Figure \ref{summ_2}. 
\end{lem}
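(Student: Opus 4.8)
The plan is to first pin down the combinatorial type of $\Gamma$ using Gauss--Bonnet, then pass to the tangent cones and use plane Euclidean geometry to compute the corner angles and recognize $M$ as a double. Throughout, write $C_1^A,C_2^A$ and $C_1^B,C_2^B$ for the corners of the bigons $A$ and $B$, and $\alpha_1,\alpha_2,\beta_1,\beta_2$ for their respective corner angles.

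\emph{Combinatorics of $\Gamma$.} By Proposition \ref{num_cond}, $p=q=2$ gives $n=2$, so $\Gamma$ is a connected graph with two vertices and two edges having a unique cycle (Theorem \ref{gamma_prop}); since $c$ and $d$ are vertices of $\Gamma$ and $n=2$, the two vertices are exactly $c$ and $d$. I would rule out a degree-$1$ vertex as follows. If $v\in\{c,d\}$ had degree $1$, then by the corner-gluing description preceding Observation \ref{degree_leq_2} the cone angle at $v$ equals the single corner angle glued there, hence equals $\frac{2\pi}{3}$. But applying Gauss--Bonnet (Theorem \ref{G-B}) to the disk $A$ (or $B$), whose interior contains one cone point of angle $\frac{4\pi}{3}$ by Proposition \ref{Vor_emb_in_con} and whose boundary is a geodesic bigon, gives $\alpha_1+\alpha_2=\frac{2\pi}{3}$, so the other corner angle would be $0$, which is impossible. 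Hence both vertices have degree $2$ and $\Gamma$ is the $2$-cycle on $\{c,d\}$, proving the first assertion.

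\emph{Gluing pattern and equidistance.} With two degree-$2$ vertices and four corners, each vertex receives exactly two corners. I would next show each cell contributes one corner to each vertex: if both corners of $A$ mapped to a single vertex, the two edges of $A$ would become loops there, and matching them under $\pi$ either to edges of $B$ (forcing $c=d$) or to each other (disconnecting $\Gamma$) is impossible. So, after relabeling, $C_1^A,C_1^B\mapsto c$ and $C_2^A,C_2^B\mapsto d$, the edges of $A$ are glued to edges of $B$, and in particular $c,d\in\pi(\partial A)\cap\pi(\partial B)\subseteq Equidist(a,b)$ by Proposition \ref{equidist(a,b)}. Thus $d_M(a,c)=d_M(b,c)$ and $d_M(a,d)=d_M(b,d)$; combined with the holonomy argument (Theorem \ref{holonomy_argument}), which gives $d_M(a,d)=d_M(b,c)$ and $d_M(a,c)=d_M(b,d)$, all four distances coincide with a common value $s$.

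\emph{The cone computation.} Working in $T_aM$, which has cone angle $\frac{4\pi}{3}$, the bigon $A$ has both corners at distance $s$ from the apex $v_a$, with $v_a$ in its interior. Since $A$ is convex (Proposition \ref{cut-loc}), each edge subtends an angle less than $\pi$ at $v_a$; developing the corresponding sector into the plane realizes that edge as the base of an isoceles triangle with apex $v_a$, legs of length $s$, and apex angle equal to the subtended angle, the two subtended angles $\phi$ and $\frac{4\pi}{3}-\phi$ summing to the cone angle. Adding the two base angles at a common corner gives corner angle $\frac{\pi-\phi}{2}+\frac{\pi-(\frac{4\pi}{3}-\phi)}{2}=\pi-\frac{2\pi}{3}=\frac{\pi}{3}$, independent of $\phi$; this establishes (1)--(3) for $A$ and verbatim for $B$. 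The edge subtending $\phi$ has length $2s\sin(\phi/2)$, so the length-matching imposed by the gluing forces the two bigons to share $s$ and the same unordered pair of subtended angles, whence $A$ and $B$ are isometric. Finally, since $A\cong B$ and $\pi$ glues $\partial A$ to $\partial B$ edge-to-edge through this isometry, $M$ is the double of $A$; cutting each cone along a ray from its apex and unfolding turns each bigon into the planar polygon of Figure \ref{summ_2} and records the two edge identifications. I expect the main obstacle to be the rigorous combinatorial bookkeeping of the first two steps --- cleanly excluding the self-gluings and the ``both corners to one vertex'' configuration --- while the geometric heart, the angle computation, is short once all four distances are known to be equal.
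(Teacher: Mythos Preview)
Your argument is correct and follows the paper's overall strategy: pin down the combinatorics of $\Gamma$, use $\pi(\partial A)\cap\pi(\partial B)\subset Equidist(a,b)$ together with the holonomy argument to force $d_M(a,c)=d_M(a,d)=d_M(b,c)=d_M(b,d)$, and then read off the geometry of the bigons. Two local differences are worth noting. First, to show $\Gamma$ is the $2$-cycle you rule out a degree-$1$ vertex by Gauss--Bonnet on the bigon (a corner of angle $2\pi/3$ would leave $0$ for the other), whereas the paper instead observes that with $p=q=2$ each edge of $\partial A$ must glue to an edge of $\partial B$ (by connectedness and counting), so $\pi|_A$ is an embedding and $\pi(\partial A)$ is already the whole $2$-cycle. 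Second, you make the corner-angle computation explicit by developing each sector as an isoceles triangle and summing base angles to get $\tfrac{\pi-\phi}{2}+\tfrac{\pi-(4\pi/3-\phi)}{2}=\tfrac{\pi}{3}$; the paper simply asserts that equality of the four distances ``implies that $A$ satisfies $(1)$--$(3)$'' and passes directly to the planar pictures $P_A,P_B$. Your route is slightly more self-contained at these two points, but the architecture is the same.
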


\begin{figure}[ht!]
\labellist
\small\hair 2pt

\pinlabel $\frac{4\pi}{3}$ [t] at 86 591 
\pinlabel $\frac{\pi}{3}$ [b] at 95 432
\pinlabel $P_A$ at 96 508

\pinlabel $\frac{4\pi}{3}$ [t] at 495 591 
\pinlabel $\frac{\pi}{3}$ [b] at 504 432
\pinlabel $P_B$ at 505 508

\pinlabel $\Gamma$ at 290 287
\endlabellist
\centering
  \includegraphics[width=0.55\textwidth]{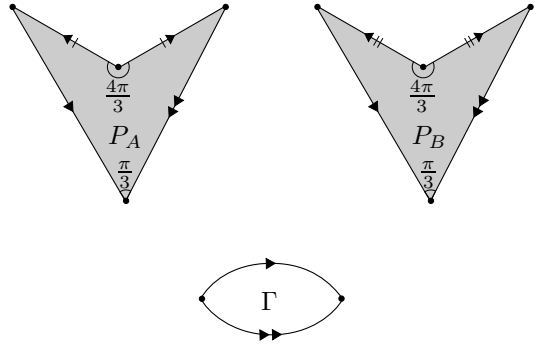}
  \caption{Recovering $M$ from $P_A$ and $P_B$ when $p=q=2$}
  \label{summ_2}
\end{figure}

\begin{proof}
Since $p=2$ and $q=2$, then both $A$ and $B$ are \emph{bigons}. Let $d'$ and $c'$ be the two vertices of $A$ with corner angles $\theta$ and $\phi=2\pi/3-\theta$, respectively (see Figure \ref{double}). Let $c''$ and $d''$ be the vertices of $B$ with $\pi(c')=\pi(c'')=c$ and $\pi(d')=\pi(d'')=d$. Then the corner angles of $B$ at $d''$ and $c''$ equal $2\pi/3-\theta=\phi$ and  $2\pi/3-\phi=\theta$, respectively (see Figure \ref{double}).

Consider the quotient map $\pi \colon A \sqcup  B \to M$ that identifies the edges $\partial A \sqcup \partial B$ in pairs to obtain $M$. Since the graph $\Gamma$ is connected, then
there is an edge on $\partial A$ that gets identified to an edge on $\partial B$.
Let $x$ be the common length of these two edges. The remaining two edges
on $\partial A \sqcup \partial B$ get identified
between themselves. Let $y$ be the common length of these two edges.  Since the map $\pi$ is $1$-$1$ on $\text{int}(A) \sqcup \text{int}(B)$ and $\pi(\partial A)=\pi(\partial B)$, then standard topological arguments show that the restriction of the map $\pi$ to either $A$ or $B$ is a topological embedding. The hex sphere $M$ can be recovered from $A$ and $B$ by
identifying their boundaries according to the gluing pattern from
Figure \ref{double}.

\begin{figure}[ht!]
\labellist
\small\hair 2pt

\pinlabel $a$ [b] at 70 605
\pinlabel $d'$ [b] at 74 749
\pinlabel $\theta$ [t] at 70 730 
\pinlabel $c'$ [t] at 70 456
\pinlabel $A$ at 118 604
\pinlabel $\phi$ [b] at 70 477

\pinlabel $b$ [b] at 498 605
\pinlabel $d''$ [b] at 504 749
\pinlabel $\phi$ [t] at 498 732
\pinlabel $c''$ [t] at 504 456
\pinlabel $B$ at 546 604
\pinlabel $\theta$ [b] at 499 476

\pinlabel $x$ [r] at -23 604
\pinlabel $y$ [l] at 163 604

\pinlabel $x$ [r] at 405 604
\pinlabel $y$ [l] at 591 604
\endlabellist
  \centering
  \includegraphics[width=0.4\textwidth]{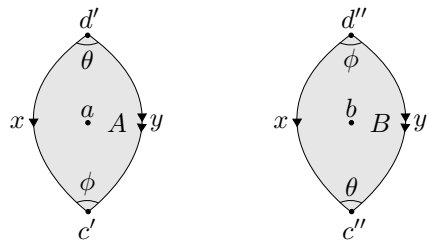}
   \caption{Recovering $M$ from $A$ and $B$}
    \label{double}
 \end{figure}

The graph $\Gamma$ has $2$ vertices and $2$ edges by Proposition \ref{num_cond}.
Since $\pi$ restricted to $A$ is an embedding, then $\pi(\partial A)$ is a \emph{cycle} graph on $2$ vertices. Thus $\pi(\partial A)$ is a subgraph of $\Gamma$ that has $2$ vertices and $2$ edges and therefore it coincides with $\Gamma$.

Since $\pi(\partial A)=\pi(\partial B)$, then the graph $\pi(\partial A)=\pi(\partial A) \cap \pi(\partial B)$, and hence Proposition \ref{equidist(a,b)} implies that $\pi(\partial A) \subset Equidist(a,b)$. In particular,
$c, d \in Equidist(a,b)$. Combining this with Theorem \ref{holonomy_argument},
we obtain that $d_M(a,d)=d_M(b,c)=d_M(a,c)=d_M(b,d)$, which implies that $A$ satisfy $(1)-(3)$ from the statement of the lemma.

Since the vertex $a$ of the cone $T_aM$ lies in the interior of $A \subset T_aM$, then there is a unique shortest geodesic in $A$ from $a$ to $d'$. Cutting $A$ along this geodesic we get a planar polygon $P_A$. Similarly, cutting $B$ along the unique shortest geodesic in $B$ from $b$ to $d''$, we get a planar polygon $P_B$. The polygons $P_A$ and $P_B$ are isometric because $d_M(a,d)=d_M(b,c)=d_M(a,c)=d_M(b,d)$, $d_{P_A}(c',d'_1)=d_{P_B}(c'',d''_1)$ and $d_{P_A}(c',d'_2)=d_{P_B}(c'',d''_2)$. Therefore, the disks $A$ and $B$ are also isometric.

The last assertion of the statement of the lemma follows from Figure \ref{double}, which shows how to recover $M$ from $A$ and $B$ by identifying pair of edges on their boundaries.
\end{proof}

We now show that the subcase $p=2$ and $q=4$ is \emph{impossible}.

\begin{lem} \label{lem_p2_q4}
The case $p=2$ and $q=4$ can not occur.
\end{lem}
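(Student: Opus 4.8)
The plan is to reach a contradiction from the combinatorics of $\Gamma$ alone, fed into the holonomy argument, without ever examining the shape of the quadrilateral $B$. Since $p=2$ and $q=4$, Proposition \ref{num_cond} gives $n=(p+q)/2=3$, so $\Gamma$ is connected with three vertices, three edges, and a unique cycle (Theorem \ref{gamma_prop}). Its vertices are the two cone points $c,d$ together with one further vertex $w$; as $a$ and $b$ lie in the interiors of the Voronoi cells, $w$ is a regular point, and Observation \ref{degree_leq_2} forces $\deg w \geq 3$. In particular $w$ cannot be a vertex of degree $1$.

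The crucial step is to locate the unique cycle. Here I would use that $\partial A$ is a bigon, i.e.\ it has exactly two edges $e_1,e_2$ meeting at two corners $P,Q$. These two edges cannot be glued to each other under $\pi$: such a gluing would seal $A$ into a closed surface that, being open and closed in the connected sphere $M$, would exhaust $M$ and leave no room for the positive-area image of $B$. Hence $e_1$ and $e_2$ are each identified with an edge of $\partial B$, so their images $g_1 = \pi(e_1)$ and $g_2 = \pi(e_2)$ are two distinct edges of $\Gamma$ lying in $\pi(\partial A)\cap\pi(\partial B)$, and both join $\pi(P)$ to $\pi(Q)$. If $\pi(P)=\pi(Q)$ then $g_1,g_2$ would be two loops, producing two independent cycles and contradicting Theorem \ref{gamma_prop}; therefore $\pi(P)\neq\pi(Q)$ and $\{g_1,g_2\}$ is a genuine $2$-cycle, which must be the unique cycle of $\Gamma$. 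Since $g_1,g_2 \in \pi(\partial A)\cap\pi(\partial B)$, Proposition \ref{equidist(a,b)} places both cycle vertices $\pi(P),\pi(Q)$ in $Equidist(a,b)$.

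The remaining (third) vertex then lies off the cycle. With three vertices, three edges, and two edges already used by the cycle, the last edge must join this third vertex to a cycle vertex, making the third vertex a leaf of degree $1$; by Observation \ref{degree_leq_2} it is a cone point. Since $w$ has degree $\geq 3$ it is one of the two cycle vertices, so after relabeling I may assume $c=\pi(P)$ is the other cycle vertex and $d$ is the degree-$1$ leaf. This gives $d_M(a,c)=d_M(b,c)$ because $c\in Equidist(a,b)$, while Lemma \ref{lem_vert_deg_1} gives $d \notin Equidist(a,b)$, i.e.\ $d_M(a,d)\neq d_M(b,d)$.

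Finally I would apply the holonomy argument (Theorem \ref{holonomy_argument}), which supplies $d_M(a,d)=d_M(b,c)$ and $d_M(a,c)=d_M(b,d)$. Combining the latter with $d_M(a,c)=d_M(b,c)$ yields $d_M(b,d)=d_M(b,c)$, and feeding this into the former gives $d_M(a,d)=d_M(b,c)=d_M(b,d)$. Thus $d\in Equidist(a,b)$, contradicting the previous paragraph, so the case $p=2$, $q=4$ cannot occur. The main obstacle is the middle paragraph: reliably identifying the unique cycle with the two edges coming from the bigon $\partial A$ and, with it, ruling out the degenerate gluings (the self-fold of $\partial A$ and the collapse $\pi(P)=\pi(Q)$) that would otherwise leave open a configuration with both cone points as leaves.
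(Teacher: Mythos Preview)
Your proof is correct and reaches the same contradiction as the paper (one cone point is a degree-$1$ leaf of $\Gamma$, hence not in $Equidist(a,b)$, while the holonomy argument together with the other cone point lying in $Equidist(a,b)$ forces it to be equidistant), but you get there by a cleaner route. The paper fixes an edge pairing $a_1\sim b_1$, then enumerates the three possible positions of the second $\partial B$-edge $b_2$ that meets $\partial A$; one of these gluings produces a torus and is discarded on topological grounds, and the other two yield the same lollipop graph $\Gamma$, which is then eliminated by the holonomy/equidistance argument. You bypass this enumeration entirely by reading the structure of $\Gamma$ directly: the self-fold of the bigon $\partial A$ is excluded by the area/connectedness argument, so both bigon edges land in $\pi(\partial A)\cap\pi(\partial B)$; the unique-cycle property (Theorem~\ref{gamma_prop}) then forces these two edges to \emph{be} the cycle and the remaining edge to be a pendant, giving the degree sequence $(1,2,3)$ without ever looking at $B$. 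What your approach buys is that the ``torus'' case never has to be named---it is absorbed into the use of Theorem~\ref{gamma_prop}, which already encodes that $M$ is a sphere---so the argument is uniform rather than case-by-case. What the paper's approach buys is concreteness: it makes the actual gluing patterns visible, which matches the style of the surrounding analysis and feeds into the explicit reconstruction figures. Your worry about the ``middle paragraph'' is unfounded; both degenerate configurations (the self-fold and $\pi(P)=\pi(Q)$) are genuinely excluded by the arguments you give, since the first disconnects $\Gamma$ (equivalently, makes $\pi(A)$ clopen in $M$) and the second produces two loops and hence $\operatorname{rank} H_1(\Gamma)\geq 2$.
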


\begin{proof}
By Theorem \ref{gamma_prop}, $\Gamma$ is a graph embedded on the sphere that contains exactly one cycle. Also, by Proposition \ref {num_cond}, the number of vertices of the graph $\Gamma$ equals the number of edges of $\Gamma$, which equals $\frac{p+q}{2}=\frac{2+4}{2}=3$.

Since the graph $\Gamma$ is connected, then there is an edge $a_1$ of $\partial A$ that gets identified to an edge $b_1$ of $\partial B$. Let $a_2$ be the other edge of $\partial A$, which gets identified to an edge $b_2$ of $\partial B$ (other than $b_1$). Let $b_3$ and $b_4$ be the other two edges of $\partial B$, which get identified between themselves.

Consider the space $X$ obtained from $A \sqcup B$ by identifying $a_1$ with $b_1$. We have 3 cases, depending on the location of the edge $b_2$ on $\partial B$. These cases correspond to the $3$ diagrams on the right of Figure \ref{p2q4_3}. We will show that in each of these cases we get a contradiction, and thus the case $p=2$, $q=4$ is impossible.

\begin{figure}[ht!]
\labellist
\small\hair 2pt

\pinlabel $\frac{4\pi}{3}$ [t] at 116 427
\pinlabel $\frac{4\pi}{3}$ [t] at 199 427
\pinlabel $a_1$ [r] at 164 449
\pinlabel $\verteq$ [r] at 160 437
\pinlabel $b_1$ [r] at 164 423
\pinlabel $a_2$ [l] at 237 436
\pinlabel $A$ at 204 470
\pinlabel $B$ at 110 470
\pinlabel $X$ [b] at 151 510
\pinlabel {\LARGE $\circlearrowleft$} at 199 436
\pinlabel {\LARGE $\circlearrowleft$} at 116 436

\pinlabel $\frac{4\pi}{3}$ [t] at 398 427
\pinlabel $\frac{4\pi}{3}$ [t] at 481 427
\pinlabel $a_1$ [r] at 446 449
\pinlabel $\verteq$ [r] at 442 437
\pinlabel $b_1$ [r] at 446 423
\pinlabel $a_2$ [l] at 519 436
\pinlabel $A$ at 486 470
\pinlabel $B$ at 392 470
\pinlabel $b_2$ [r] at 347 436
\pinlabel {\LARGE $\circlearrowleft$} at 481 436
\pinlabel {\LARGE $\circlearrowleft$} at 398 436

\pinlabel $\frac{4\pi}{3}$ [t] at 398 607
\pinlabel $\frac{4\pi}{3}$ [t] at 481 607
\pinlabel $a_1$ [r] at 446 629
\pinlabel $\verteq$ [r] at 442 617
\pinlabel $b_1$ [r] at 446 603
\pinlabel $a_2$ [l] at 519 616
\pinlabel $A$ at 486 650
\pinlabel $B$ at 392 650
\pinlabel $b_2$ [b] at 414 676
\pinlabel {\LARGE $\circlearrowleft$} at 481 616
\pinlabel {\LARGE $\circlearrowleft$} at 398 616

\pinlabel $\frac{4\pi}{3}$ [t] at 398 247
\pinlabel $\frac{4\pi}{3}$ [t] at 481 247
\pinlabel $a_1$ [r] at 446 269
\pinlabel $\verteq$ [r] at 442 257
\pinlabel $b_1$ [r] at 446 243
\pinlabel $a_2$ [l] at 519 256
\pinlabel $A$ at 486 290
\pinlabel $B$ at 392 290
\pinlabel $b_2$ [t] at 414 197
\pinlabel {\LARGE $\circlearrowleft$} at 481 256
\pinlabel {\LARGE $\circlearrowleft$} at 398 256

\endlabellist
\centering
\includegraphics[width=0.7\textwidth]{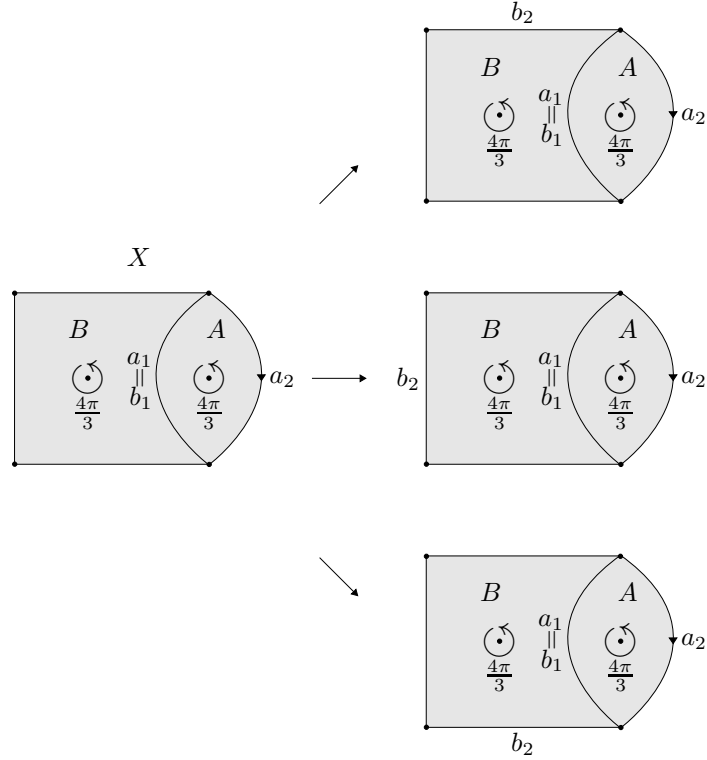}
\caption{The possibilities for the edge $b_2$}
\label{p2q4_3}
\end{figure}

$\bullet$ \textbf{Case I:} \emph{The edge $b_2$ is located as in the top right diagram of Figure \ref{p2q4_3}}. We can assume that $b_3$ and $b_4$ have the orientations indicated in Figure \ref{p2q4_5}. Therefore, $\Gamma$ has $3$ vertices, which have degrees $1$, $2$ and $3$. By Observation \ref{degree_leq_2}, exactly one of $c$ and $d$, say $c$, is the vertex of degree $1$ and the other, $d$, is the vertex of degree $2$ (see Figure \ref{p2q4_6}).

\begin{figure}[ht!]
\labellist
\small\hair 2pt

\pinlabel $\frac{4\pi}{3}$ [t] at 116 427
\pinlabel $\frac{4\pi}{3}$ [t] at 199 427
\pinlabel $a_1$ [r] at 164 449
\pinlabel $\verteq$ [r] at 160 437
\pinlabel $b_1$ [r] at 164 423
\pinlabel $b_2$ [b] at 134 498
\pinlabel $a_2$ [l] at 237 436
\pinlabel $b_3$ [r] at 63 436
\pinlabel $b_4$ [t] at 134 373
\pinlabel {\LARGE $\circlearrowleft$} at 199 436
\pinlabel {\LARGE $\circlearrowleft$} at 116 436
\vspace{12pt}
\endlabellist
 \centering
 \subfloat[]{\label{p2q4_5}\includegraphics[width=0.28\textwidth]{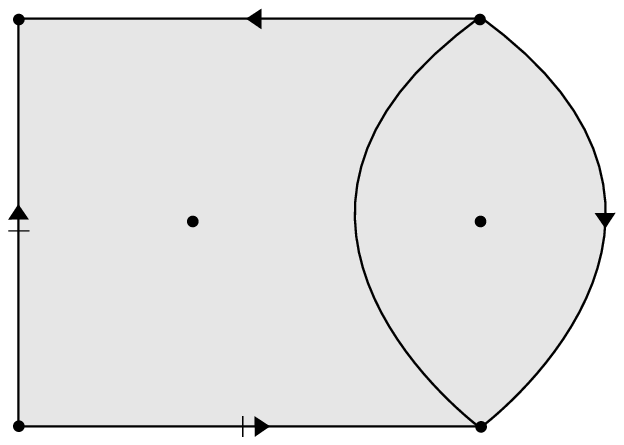}}
\hspace{0.25\textwidth}
\labellist
\small\hair 2pt
\pinlabel $d$ [b] at 342 683
\pinlabel $c$ [t] at 342 220
\vspace{12pt}
\endlabellist
  \subfloat[]{\label{p2q4_6}\includegraphics[width=0.1\textwidth]{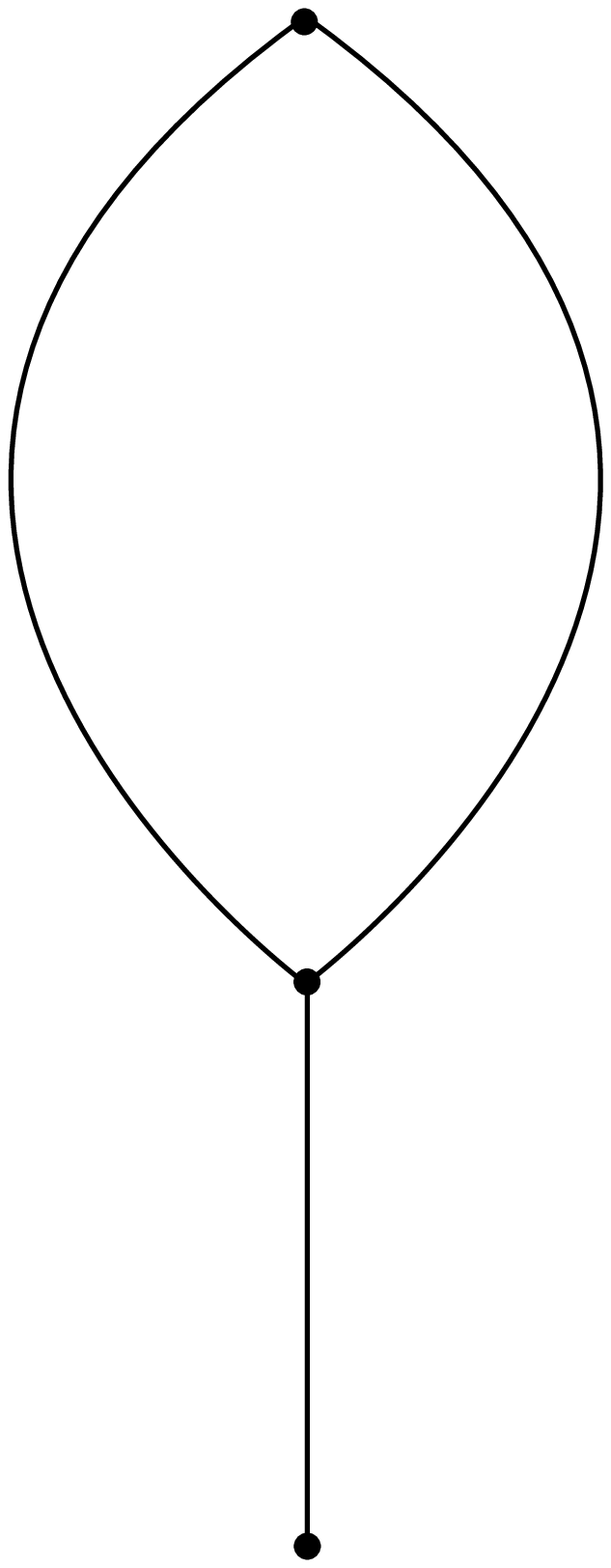}}
 \caption{Gluing pattern on $\partial X$ and the graph $\Gamma$ in Case I}
\end{figure}

Since  $d \in \pi(\partial A) \cap \pi(\partial B) \subset Equidist(a,b)$, then Theorem \ref{holonomy_argument} implies that $c \in Equidist(a,b)$. This contradicts Lemma \ref{lem_vert_deg_1}.

$\bullet$ \textbf{Case II:} \emph{The edge $b_2$ is located as in the middle right diagram of Figure \ref{p2q4_3}}. Since $M$ is an orientable surface $M$, then we can assume that the orientations of the edges $a_2$, $b_2$, $b_3$ and $b_4$ are those from Figure \ref{p2q4_7}. In particular, $M$ is homeomorphic to a torus, which contradicts that $M$ is a hex sphere.

\begin{figure}[ht!]
\labellist
\small\hair 2pt

\pinlabel $\frac{4\pi}{3}$ [t] at 116 427
\pinlabel $\frac{4\pi}{3}$ [t] at 199 427
\pinlabel $a_1$ [r] at 164 449
\pinlabel $\verteq$ [r] at 160 437
\pinlabel $b_1$ [r] at 164 423
\pinlabel $b_3$ [b] at 134 498
\pinlabel $a_2$ [l] at 237 436
\pinlabel $b_2$ [r] at 63 436
\pinlabel $b_4$ [t] at 134 374
\pinlabel {\LARGE $\circlearrowleft$} at 199 436
\pinlabel {\LARGE $\circlearrowleft$} at 116 436
\vspace{12pt}
\endlabellist
  \centering
\subfloat[Case II]{\label{p2q4_7}\includegraphics[width=0.28\textwidth]{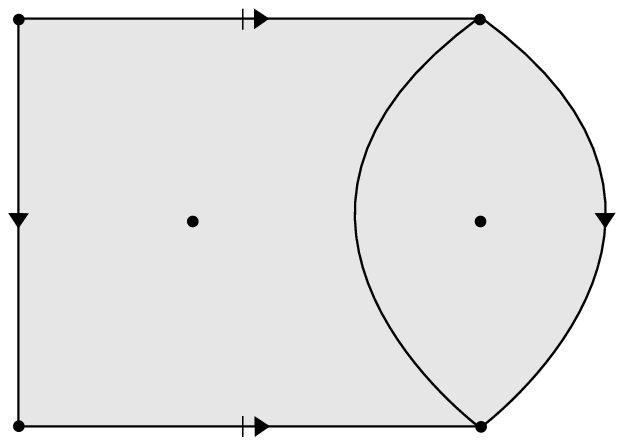}}
\hspace{0.2\textwidth}
\labellist
\small\hair 2pt
\pinlabel $\frac{4\pi}{3}$ [t] at 116 427
\pinlabel $\frac{4\pi}{3}$ [t] at 199 427
\pinlabel $a_1$ [r] at 164 449
\pinlabel $\verteq$ [r] at 160 437
\pinlabel $b_1$ [r] at 164 423
\pinlabel $b_4$ [b] at 134 498
\pinlabel $a_2$ [l] at 237 436
\pinlabel $b_3$ [r] at 63 436
\pinlabel $b_2$ [t] at 136 374
\pinlabel {\LARGE $\circlearrowleft$} at 199 436
\pinlabel {\LARGE $\circlearrowleft$} at 116 436
\vspace{12pt}
\endlabellist
 \subfloat[Case III]{\label{p2q4_8}\includegraphics[width=0.28\textwidth]{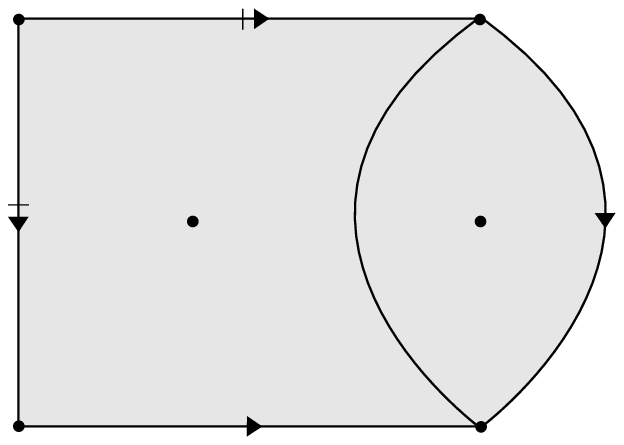}}
  \caption[\hspace{1em} Gluing patterns on $\partial X$ in Cases II and III]{Gluing patterns on $\partial X$ in Cases II and III}
 \end{figure}

$\bullet$ \textbf{Case III:} \emph{The edge $b_2$ is located as in the bottom right diagram of Figure \ref{p2q4_3}}. The identification pattern in this case is the one from Figure \ref{p2q4_8}. In particular, the graph $\Gamma$ in this case is the same as that of Case I (see Figure \ref{p2q4_6}). But we showed in Case I that this graph cannot occur, and so this case is impossible too.
\end{proof}

Using the ideas from the proof of Lemma \ref{lem_p2_q4}, it is easy to show that the case $p=2$ and $q=6$ is also \emph{impossible}.

\subsection{The Remaining Cases: $p=3$ and $p=4$} \label{subsect-remain-cases}

Arguing as we did for the case $p=2$, the reader can easily prove the two lemmas below. 

\begin{lem} \label{lem_p3_q3}
 If $p=3$, then $q=3$. Also, if $p=3$, then the disks $A$ and $B$ are  isometric, and each of them satisfies the following:
\begin{enumerate}
 \item its interior contains the vertex of the cone $T_aM$;
 \item its boundary consists of three geodesics in $T_aM$;
 \item one of its three corner angles equals $2\pi/3$.
\end{enumerate}
 Moreover, the hex sphere $M$ can be recovered from the (planar) isometric polygons from Figure \ref{summ_4}  by identifying pairs of edges on their boundaries as shown in Figure \ref{summ_4}. This figure also shows the only possible Voronoi graph $\Gamma$ when $p=q=3$.
\end{lem}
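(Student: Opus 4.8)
The plan is to mirror the treatment of the case $p=2$, splitting the argument into a ruling-out step (the analogue of Lemma \ref{lem_p2_q4}) and a structural step (the analogue of Lemma \ref{lem_p2_q2}). First I record the admissible values of $q$: by Proposition \ref{num_cond} and Notation \ref{plq} we have $q\geq 3$, $p+q\leq 8$, and $p+q$ even, so with $p=3$ the integer $q$ is odd and $q\in\{3,5\}$. It therefore suffices to exclude $q=5$ and then to analyze $q=3$ in detail.

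To rule out $q=5$, note that $\Gamma$ then has $n=4$ vertices and $4$ edges, so the sum of the vertex–degrees equals $2\cdot 4=8$, which also equals the number $p+q=8$ of corners of $A\sqcup B$. By Observation \ref{degree_leq_2} at most two vertices have degree $\leq 2$, and a short count forces the degree sequence to be $3,3,1,1$, with the two degree-$1$ vertices being $c$ and $d$. A degree-$1$ vertex has a neighborhood consisting of a single glued corner whose two bounding edges are identified to each other; its cone angle is this corner angle, which is $<\pi$ and must therefore equal $\tfrac{2\pi}{3}$. Counting edges, the two folds producing $c$ and $d$ cannot both lie in the triangle $A$ (that would need four of its three edges), so one fold lies in $A$ and one in $B$; this leaves exactly one $AB$-edge and a pair of leftover edges of the pentagon $B$. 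If these leftover edges are adjacent, their fold creates a third point of cone angle $<\pi$, i.e. a third cone point of angle $\tfrac{2\pi}{3}$, which is impossible; if they are non-adjacent, the orientation-compatible identification forces $M$ to have positive genus, exactly as in Case II of the proof of Lemma \ref{lem_p2_q4}. Hence $q=5$ cannot occur and $q=3$.

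For $q=3$ both $A$ and $B$ are geodesic triangles and $\Gamma$ has $n=3$ vertices and $3$ edges. Let $k$ be the number of $AB$-edges; as before $k$ is odd and $k\le 3$. If $k=3$ then $A$ embeds as a geodesic triangle whose three vertices each receive one corner of $A$ and one corner of $B$, giving three degree-$2$ vertices and hence three cone points of angle $\tfrac{2\pi}{3}$, which is impossible. Thus $k=1$: one $AB$-edge together with a single self-fold in each triangle. In a triangle the two folded edges automatically share a vertex, so each fold has an apex whose cone angle equals that corner; being $<\pi$ it equals $\tfrac{2\pi}{3}$, so the apex is $c$ in $A$ and $d$ in $B$. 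This already yields conclusion (3), while Proposition \ref{Vor_emb_in_con} gives (1)--(2). Identifying the far endpoints of the two bases produces a single vertex $Z$ carrying a loop (the unique cycle predicted by Theorem \ref{gamma_prop}) with two pendant edges to the leaves $c,d$; this is the graph of Figure \ref{summ_4}, and $\chi=3-3+2=2$ confirms we have a sphere.

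The main obstacle is the isometry $A\cong B$, which is harder than in the $p=q=2$ case because $c$ and $d$ are now leaves and, by Lemma \ref{lem_vert_deg_1}, do \emph{not} lie in $Equidist(a,b)$, so fewer distances are forced equal. The approach is to parametrize each geodesic triangle by two numbers: the distance from the cone vertex $v_p$ to its $\tfrac{2\pi}{3}$-corner and the distance to the common far vertex $Z$. The fold is a reflective isometry bisecting the $\tfrac{2\pi}{3}$ angle, so each triangle decomposes into two congruent Euclidean triangles based at $v_p$, and the law of sines then shows that these two distances determine the folded side lengths and the base length, hence the whole triangle up to isometry. Now $Z$ lies on the $AB$-edge, so $Z\in Equidist(a,b)$ by Proposition \ref{equidist(a,b)}, giving $d_M(a,Z)=d_M(b,Z)$, while Theorem \ref{holonomy_argument} gives $d_M(a,c)=d_M(b,d)$; thus $A$ and $B$ share both parameters and are isometric, and the two base lengths agree automatically, confirming that the $AB$-identification is consistent. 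Cutting $A$ and $B$ open along the geodesics from $v_a,v_b$ to $c,d$ yields the planar isometric polygons $P_A,P_B$ of Figure \ref{summ_4}, and re-gluing according to the pattern found above recovers $M$.
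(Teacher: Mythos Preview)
Your outline matches exactly what the paper intends (the paper gives no proof, only the instruction to ``argue as we did for the case $p=2$''), and the core of your argument---the parametrization of each triangle by the two radii $r=d_M(a,c)$ and $s=d_M(a,Z)$, together with Proposition~\ref{equidist(a,b)} for $s$ and Theorem~\ref{holonomy_argument} for $r$---is correct and is the right idea for the isometry $A\cong B$. There are, however, two small gaps in your case analysis.

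First, in excluding $q=5$, the non-adjacent $BB$ identification does \emph{not} produce positive genus. With the $A$-fold at $\alpha_1$, the $B$-fold at $\beta_1$, the $AB$-edge $a_2\sim b_3$, and the orientable gluing $b_2\sim b_4$, one computes the vertex classes to be $\{\alpha_1\}$, $\{\beta_1\}$, $\{\alpha_2,\alpha_3,\beta_3,\beta_4\}$, $\{\beta_2,\beta_5\}$, so $V=4$, $E=4$, $F=2$ and $\chi=2$: the quotient is a sphere. The actual contradiction is that the degree sequence is $(1,1,2,4)$, giving a \emph{third} vertex of degree $\le 2$ (namely $\{\beta_2,\beta_5\}$), which by Observation~\ref{degree_leq_2} would have to be a third cone point of angle $\tfrac{2\pi}{3}$. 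So the case is impossible, but not by the analogy with Case~II of Lemma~\ref{lem_p2_q4}.

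Second, in the $q=3$, $k=3$ subcase you assert that $\pi|_A$ is an embedding and that each vertex of $\Gamma$ receives one corner from each of $A$ and $B$. This is only one of the two orientable bijections between the edge sets of the two triangles. The other (``cyclic'') bijection collapses all six corners to a single vertex, giving $V=1$, $E=3$, $F=2$ and $\chi=0$, i.e.\ a torus; that case is excluded for the reason you cite for the $BB$-pair, but it needs to be mentioned. Once you patch these two points, your proof is complete and follows the paper's intended route.
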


\begin{figure}[ht!]
\labellist
\small\hair 3pt
\pinlabel $\frac{4\pi}{3}$ [t] at 86 628
\pinlabel $\frac{\pi}{3}$ [tl] at -53 709
\pinlabel $\frac{\pi}{3}$ [tr] at 225 709
\pinlabel $P_A$ at 12 588
\pinlabel $\frac{4\pi}{3}$ [t] at 520 628
\pinlabel $\frac{\pi}{3}$ [tl] at 381 709
\pinlabel $\frac{\pi}{3}$ [tr] at 659 709
\pinlabel $P_B$ at 446 588
\pinlabel $\Gamma$ [b] at 304 402
\endlabellist
\centering
  \includegraphics[width=0.55\textwidth]{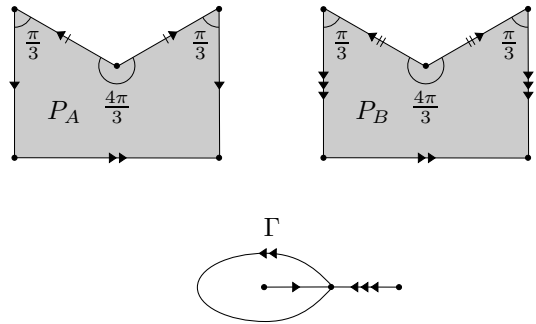}
\caption{Recovering $M$ from $P_A$ and $P_B$ when $p=q=3$}
  \label{summ_4}
\end{figure}

\begin{lem} \label{lem_p4_q4}
 If $p=4$, then $q=4$. Also, if $p=4$, then the disks $A$ and $B$ are  isometric, and each of them satisfies the following:
\begin{enumerate}
 \item its interior contains the vertex of the cone $T_aM$;
 \item its boundary consists of four geodesics in $T_aM$;
 \item one of its four corner angles equals $2\pi/3$.
\end{enumerate}
 Moreover, the hex sphere $M$ can be recovered from the (planar) isometric polygons shown in Figure \ref{summ_5}  by identifying pairs of edges on their boundaries as shown in Figure \ref{summ_5}. This figure also shows the only possible Voronoi graph $\Gamma$ when $p=q=4$.
\end{lem}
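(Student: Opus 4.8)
The plan is to follow the template of the case $p=2$ (Lemmas \ref{lem_p2_q2} and \ref{lem_p2_q4}), replacing the bigon computations by computations for a geodesic quadrilateral. First I would dispose of the equality $q=4$: since $p\le q$ (Notation \ref{plq}) and $p+q$ is even with $p+q\le 8$ (Proposition \ref{num_cond}), the hypothesis $p=4$ forces $8\le p+q\le 8$, hence $q=4$ and $n=4$. Next I would pin down the combinatorics of $\Gamma$. Applying Gauss--Bonnet (Theorem \ref{G-B}) to each of $A$ and $B$, whose interior contains a single cone point of angle $\frac{4\pi}{3}$ and whose boundary is a geodesic $4$-gon, shows that the four corner angles of $A$ (and of $B$) sum to $\frac{8\pi}{3}$. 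Since the two regular vertices of $\Gamma$ have degree $\ge 3$ (Observation \ref{degree_leq_2}) while the four vertex-degrees sum to $2n=8$, the degree sequence is forced to be $(3,3,1,1)$, with $c$ and $d$ the two degree-$1$ vertices. Because $\Gamma$ is connected with a unique cycle (Theorem \ref{gamma_prop}) and no cycle can meet a degree-$1$ vertex, the cycle is supported on the two regular vertices $u,w$; a count of corners (a regular vertex receiving the folded corners of two pendants would have degree $4$, and a loop would create a second cycle) then forces $\Gamma$ to be the double edge on $\{u,w\}$ together with one pendant edge to $c$ and one to $d$, which is the graph drawn in Figure \ref{summ_5}.

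I would then read off the gluing pattern. Each degree-$1$ vertex carries a single corner, necessarily equal to the full cone angle $\frac{2\pi}{3}$, so whichever of $A,B$ supplies that corner has a corner of angle $\frac{2\pi}{3}$ (this is item (3)); items (1) and (2) are immediate from Proposition \ref{Vor_emb_in_con}, Lemma \ref{anal_cut_loc} and the definition of $p$. By Lemma \ref{lem_vert_deg_1} we have $c,d\notin Equidist(a,b)$; assuming without loss of generality that $d_M(a,c)<d_M(b,c)$, Proposition \ref{equidist(a,b)} forces the corner at $c$ to belong to $A$, and then the holonomy identities $d_M(a,c)=d_M(b,d)$ and $d_M(a,d)=d_M(b,c)$ (Theorem \ref{holonomy_argument}) give $d_M(b,d)<d_M(a,d)$, so the corner at $d$ belongs to $B$. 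Hence each of $A,B$ has exactly one corner of angle $\frac{2\pi}{3}$. The two sides of the corner at $c$ must be glued to one another, since they are the only edges of $\Gamma$ at the degree-$1$ vertex $c$, so $A$ folds at $c$; likewise $B$ folds at $d$; and the two remaining edges of $\partial A$ are glued to the two remaining edges of $\partial B$, forming the double edge, which lies in $Equidist(a,b)$. Orientability, argued exactly as in Lemma \ref{lem_p2_q4}, eliminates the incompatible orientations and leaves a single consistent pattern.

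The heart of the proof, and the place where it genuinely differs from the case $p=q=3$, is showing $A\cong B$. Writing $r=d_M(a,c)=d_M(b,d)$, and noting that $u,w$ lie in the closed set $Equidist(a,b)$ so that $\rho:=d_M(a,u)=d_M(b,u)$ and $\sigma:=d_M(a,w)=d_M(b,w)$, I observe that the radii from the apex to the four vertices form the multiset $\{r,\rho,\rho,\sigma\}$ for $A$ and $\{r,\sigma,\sigma,\rho\}$ for $B$; thus $A\cong B$ is equivalent to the equality $\rho=\sigma$, which is not available a priori. I would prove $\rho=\sigma$ by elementary trigonometry. The fold condition together with the corner angle $\frac{2\pi}{3}$ at $c$ forces the angle $\frac{\pi}{3}$ at $c$ in each of the two apex triangles meeting there (and likewise at $d$); the two double-edge gluings identify triangles with equal side-lengths, so their apex angles agree on the two sides; comparing the two apex-angle sums, each equal to $\frac{4\pi}{3}$, then yields equal apex angles $\alpha$ in the triangles $v_a P_1 P_2$ and $v_b Q_1 Q_2$. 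These two triangles share all three angles and the side $r$ opposite corresponding equal angles, so they are congruent, giving $\rho=\sigma$. With $A\cong B$ in hand, I would cut each cell along the geodesic from its apex to the vertex $c$ (resp.\ $d$) to unroll it into a planar polygon $P_A$ (resp.\ $P_B$), note that $P_A\cong P_B$, and display the edge identifications (the two folds, the two double-edge gluings, and the re-gluing of the cut) recovering $M$, as in Figure \ref{summ_5}. The main obstacle is precisely the equality $\rho=\sigma$: unlike the single-regular-vertex situation of the case $p=q=3$, here two a priori independent apex radii must be reconciled, and it is exactly the angle-sum bookkeeping above that accomplishes this.
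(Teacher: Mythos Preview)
Your proposal is correct and carries out exactly what the paper asks of the reader: the paper gives no proof of this lemma beyond the instruction to argue as in the case $p=2$, and your case analysis (forcing $q=4$, determining the degree sequence $(3,3,1,1)$, pinning down $\Gamma$, reading off the fold pattern, and then proving $A\cong B$) follows that template faithfully. Your trigonometric argument for $\rho=\sigma$ is the one genuinely new ingredient relative to the $p=2$ case, and it is sound: the SSS congruences coming from the fold and from the double-edge gluings yield $\gamma_1+\gamma_2=\delta_1+\delta_2$, hence $\alpha=\beta$, and then AAS on the apex triangles at $c$ and $d$ gives $\rho=\sigma$.

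One small imprecision: the clause ``a loop would create a second cycle'' is not the right reason to exclude a loop at a regular vertex (a loop at $u$ together with the tree $uw,\,wc,\,wd$ is a connected graph with a \emph{single} cycle and the correct degree sequence). The correct obstruction is that in any planar embedding such a loop bounds a monogon face on one side, while both complementary regions of $\Gamma$ must be $4$-gons since $p=q=4$. With that fix your argument is complete.
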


\begin{figure}[ht!]
\labellist
\small\hair 3pt
\pinlabel $\frac{4\pi}{3}$ [t] at 70 660
\pinlabel $\frac{\pi}{3}$ [tl] at -70 740
\pinlabel $\frac{\pi}{3}$ [tr] at 208 740
\pinlabel $P_A$ at 98 560
\pinlabel $\frac{4\pi}{3}$ [t] at 516 660
\pinlabel $\frac{\pi}{3}$ [tl] at 376 740
\pinlabel $\frac{\pi}{3}$ [tr] at 654 740
\pinlabel $P_B$ at 544 560
\pinlabel $\Gamma$ [b] at 270 359
\endlabellist
\centering
  \includegraphics[width=0.55\textwidth]{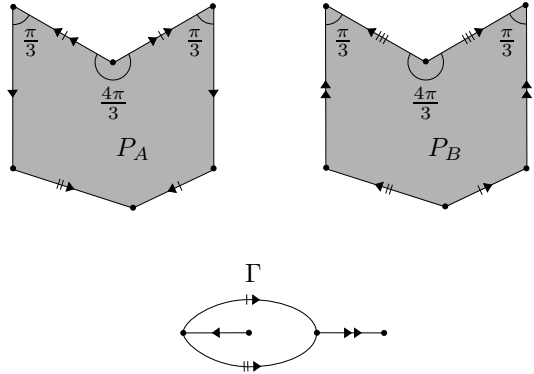}
\caption{Recovering $M$ from $P_A$ and $P_B$ when $p=q=4$}
  \label{summ_5}
\end{figure}
\section{Proving the Main Theorems} \label{sect-main-proofs}
We now prove the main theorems of this paper. 

\begin{thm} \label{thm-classif-Vor-graph}
Let $M$ be a hex sphere, and let $\Gamma$ be the Voronoi graph of $M$. Then, up to graph isomorphism, $\Gamma$ is one of the graphs from Figure \ref{prov-fig-classif-Vor-graph1}.

 \end{thm}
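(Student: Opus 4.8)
The plan is to assemble the case analysis of Section \ref{chapt_Possib_Values_pq} into a single enumeration. First I would invoke Observation \ref{obs-pos-val-p} together with Notation \ref{plq} to record that $p \in \{2,3,4\}$ and $p \le q$. Then I would use the numerical constraints of Proposition \ref{num_cond} --- namely $q \ge p$, that $p+q$ is even, and that $p+q \le 8$ --- to list all a priori feasible pairs $(p,q)$. These are
\[
(p,q) \in \{(2,2),\,(2,4),\,(2,6),\,(3,3),\,(3,5),\,(4,4)\}.
\]

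Next I would eliminate every off-diagonal pair. Lemma \ref{lem_p2_q4} rules out $(2,4)$, and the remark following it rules out $(2,6)$; Lemma \ref{lem_p3_q3} forces $q=3$ whenever $p=3$, discarding $(3,5)$; and Lemma \ref{lem_p4_q4} forces $q=4$ whenever $p=4$, which is consistent with $(4,4)$ being the only feasible pair with $p=4$. Hence the only realizable pairs are $(2,2)$, $(3,3)$ and $(4,4)$.

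Finally I would identify the Voronoi graph in each surviving case with one of the three graphs in Figure \ref{prov-fig-classif-Vor-graph1}: Lemma \ref{lem_p2_q2} describes $\Gamma$ when $p=q=2$ (a cycle on two vertices), Lemma \ref{lem_p3_q3} describes $\Gamma$ when $p=q=3$, and Lemma \ref{lem_p4_q4} describes $\Gamma$ when $p=q=4$. By Proposition \ref{num_cond} these graphs have $n = \frac{p+q}{2} = 2,\,3,\,4$ vertices respectively, so they are pairwise non-isomorphic and the resulting list is irredundant.

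I do not expect a genuine obstacle here, since all of the geometric content has already been discharged by the lemmas of Section \ref{chapt_Possib_Values_pq}. The only point requiring care is completeness of the enumeration --- making sure the parity and upper-bound constraints of Proposition \ref{num_cond} leave no feasible pair unexamined --- together with the observation that the three surviving graphs are genuinely distinct, which is immediate from their differing vertex counts.
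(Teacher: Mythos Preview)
Your proposal is correct and follows essentially the same approach as the paper's own proof: both invoke Observation~\ref{obs-pos-val-p} to restrict $p$ to $\{2,3,4\}$ and then appeal to Lemmas~\ref{lem_p2_q2}, \ref{lem_p3_q3}, and \ref{lem_p4_q4} to identify $\Gamma$ in each case. Your version is slightly more explicit in listing all feasible $(p,q)$ pairs and citing Lemma~\ref{lem_p2_q4} and its subsequent remark to discard the off-diagonal cases, whereas the paper compresses this into the single clause ``If $p=2$, then $q=2$''; your added remark that the three graphs are pairwise non-isomorphic by vertex count is a harmless bonus.
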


\begin{figure}[ht!]
\centering
  \includegraphics[width=0.45\textwidth]{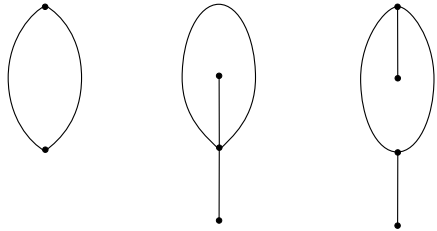}
  \caption{Classifying the Voronoi graphs of hex spheres}
  \label{prov-fig-classif-Vor-graph1}
\end{figure}

\begin{proof}
Let $p$ be the number of edges of the Voronoi region $A$. By Observation \ref{obs-pos-val-p}, the only possible values for $p$ are $2$, $3$ and $4$. If $p=2$, then $q=2$, and so Lemma \ref{lem_p2_q2} implies that the $\Gamma$ is the graph on the left of Figure \ref{prov-fig-classif-Vor-graph1}. If $p=3$, then Lemma \ref{lem_p3_q3} implies that $\Gamma$ is the graph on the middle of Figure \ref{prov-fig-classif-Vor-graph1}. Finally,  if $p=4$, then Lemma \ref{lem_p4_q4} implies that $\Gamma$ is the graph on the right of Figure \ref{prov-fig-classif-Vor-graph1}. 
\end{proof}

\begin{thm} \label{thm-describ-Vor-reg}
Let $M$ be a hex sphere, and let $A$ and $B$ its two Voronoi regions. Then 
\begin{enumerate}
 \item $A$ and $B$ are isometric.
 \item Each of $A$ and $B$ embeds isometrically in a Euclidean cone as a convex geodesic polygon,  with the center of the Voronoi region corresponding to the vertex of the cone.
 \item The hex sphere $M$ can be recovered from the disjoint union of $A$ and $B$ by identifying pairs of edges on their boundaries according to one of $3$ possible combinatorial patterns.
 \end{enumerate}
\end{thm}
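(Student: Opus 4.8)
The plan is to treat this statement as a synthesis of the case-by-case analysis carried out in Section \ref{chapt_Possib_Values_pq}, combined with the embedding furnished by Proposition \ref{Vor_emb_in_con}. Part (2) demands essentially no new argument: Proposition \ref{Vor_emb_in_con} already asserts that for $p \in \{a, b\}$ the map $f_p$ embeds $Vor(p)$ isometrically into the tangent cone $T_pM$ with image the interior of a convex, geodesic polygon star-shaped about the vertex $v_p$, and by the definition $f_p(x) = [\gamma_x'(0), d_M(p,x)]$ the center $p$ is sent to $v_p$. Passing to closures yields the polygons $A$ and $B$ required by (2), so I would dispatch (2) first by citing Proposition \ref{Vor_emb_in_con}.

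For parts (1) and (3) I would reduce everything to a finite case analysis. By Notation \ref{plq} I may assume $p \le q$, and by Observation \ref{obs-pos-val-p} the only possibilities are $p = 2$, $p = 3$, and $p = 4$. I would then handle each case by invoking the matching lemma. When $p = 3$, Lemma \ref{lem_p3_q3} forces $q = 3$, shows that $A$ and $B$ are isometric, and exhibits the gluing pattern of Figure \ref{summ_4}; when $p = 4$, Lemma \ref{lem_p4_q4} forces $q = 4$, shows that $A$ and $B$ are isometric, and exhibits the pattern of Figure \ref{summ_5}. The case $p = 2$ needs one extra step: the \emph{a priori} possibilities for $q$ are $2$, $4$, and $6$, but Lemma \ref{lem_p2_q4} rules out $q = 4$ and the same method (noted immediately after that lemma) rules out $q = 6$, leaving $q = 2$, whereupon Lemma \ref{lem_p2_q2} shows that $A$ and $B$ are isometric and supplies the pattern of Figure \ref{summ_2}.

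Assembling these three cases proves (1), since in every surviving case the relevant lemma asserts that $A$ and $B$ are isometric, and proves (3): the three gluing patterns of Figures \ref{summ_2}, \ref{summ_4}, and \ref{summ_5} are precisely the three combinatorial patterns in the statement, and they are in bijection with the three shapes of the Voronoi graph classified in Theorem \ref{thm-classif-Vor-graph} (one pattern for $p = q = 2$, one for $p = q = 3$, and one for $p = q = 4$).

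The genuine difficulty is not in this final synthesis, which is bookkeeping, but was already absorbed into the lemmas of Section \ref{chapt_Possib_Values_pq}. I expect the main obstacle to be the proof that the symmetric situation $p = q$ is forced, and concretely the exclusion of $p = 2$, $q = 4$ (and $q = 6$). There the argument must rule out several distinct edge-gluing configurations for $\partial X$, namely the three possibilities for the location of $b_2$ in Figure \ref{p2q4_3}, using orientability of $M$, the sphere topology via the unique-cycle property of $\Gamma$, and the symmetry of distances from the holonomy argument (Theorem \ref{holonomy_argument}) to reach a contradiction in each configuration. Pinning down, within each symmetric case, the \emph{unique} gluing pattern that recovers the singular hex sphere is the other delicate point, resting again on Theorem \ref{holonomy_argument} together with the corner-angle bounds coming from convexity.
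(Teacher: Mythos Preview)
Your proposal is correct and follows essentially the same route as the paper: cite Proposition \ref{Vor_emb_in_con} for part (2), and derive parts (1) and (3) from the case analysis encapsulated in Lemmas \ref{lem_p2_q2}, \ref{lem_p3_q3}, and \ref{lem_p4_q4} (together with the exclusions of $q=4,6$ when $p=2$). The paper's proof is slightly terser and notes in passing that the planar polygons $P_A, P_B$ of the figures fold back to $A, B$ by re-identifying the two sides incident to the $4\pi/3$ vertex, but otherwise the arguments coincide.
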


\begin{proof}
$(1)$ and $(2)$ follow from Lemmas \ref{lem_p2_q2}, \ref{lem_p3_q3}, \ref{lem_p4_q4} and Proposition \ref{Vor_emb_in_con}, respectively. If $p=2$, then Lemma \ref{lem_p2_q2} implies that $M$ can be recovered from the planar polygons from Figure \ref{summ_2}  by identifying pairs of edges on their boundaries as shown in Figure \ref{summ_2}. Each Voronoi region is obtained from one of these planar polygons by identifying the two sides that are incident to the only vertex of angle $4\pi/3$. Therefore, $M$ can be recovered from its Voronoi regions by identifying pairs of edges on their boundaries. This same conclusion is also true for $p=3$ and $p=4$ (by Lemmas \ref{lem_p3_q3} and \ref{lem_p4_q4}).
\end{proof}

\section{Concluding Remarks}

A hex sphere is a singular Euclidean sphere with $4$ cones whose cone angles are (integer) multiples of $\frac{2\pi}{3}$ but less than $2\pi$. Given a hex sphere $M$, we considered its Voronoi decomposition centered at the two cone points with greatest cone angles. In this paper we used elementary Euclidean geometry to describe geometrically the Voronoi regions of hex spheres. In particular, we showed that the two Voronoi regions of a hex sphere are always isometric. We also classified the Voronoi graphs of hex spheres. Finally, we gave all possible ways to reconstruct hex spheres from suitable polygons in the Euclidean plane. However, to prove all these things, we did a long and inelegant case-by-case analysis of all possible numbers of edges on the boundaries of the Voronoi cells. This makes one wonder about the existence of more direct and elegant proofs of these results. Perhaps one way to shorten the proofs of these results is using Riemannian metrics to approximate hex metrics (this was suggested by Daryl Cooper).

\bibliographystyle{alpha}
\bibliography{biblio}

\end{document}